\tikzstyle{map}=[->,semithick]
\tikzstyle{arc}=[bend left,->,semithick]
\tikzstyle{rinclusion}=[right hook->,semithick]
\tikzstyle{linclusion}=[left hook->,semithick]
\newtheoremstyle{myremark} % name
{7pt}                    % Space above
{7pt}                    % Space below
{}  	                 % Body font
{}                           % Indent amount
{\bf}       	         % Theorem head fuont
{.}                          % Punctuation after theorem head
{.5em}                       % Space after theorem head
{}  % Theorem head spec (can be left empty, meaning ‘normal’)
\theoremstyle{plain}
\newtheorem{theorem-main}{Theorem}
\newtheorem*{theorem-main1}{Theorem~\ref{thm:main1}}
\newtheorem{lemma}{Lemma}[section]
\newtheorem{theorem}[lemma]{Theorem}
\theoremstyle{definition}
\newtheorem*{definition-cnk}{Definition~\ref{def:cnk}}
\newtheorem{question}{Question}
\newtheorem*{question-motivating}{Motivating Question}
\newcounter{parentnumber}
\theoremstyle{myremark}
\newtheorem{remark}[lemma]{Remark}
\newcommand{\C}{\mathbb{C}}
\newcommand{\Q}{\mathbb{Q}}
\newcommand{\R}{\mathbb{R}}
\newcommand{\Z}{\mathbb{Z}}
\newcommand{\F}{\mathbb{F}}
\newcommand{\CP}{\ensuremath{\mathbb{C}\mathrm{P}}}
\newcommand{\diam}{\mathrm{diam}}
\newcommand{\tor}{\mathrm{Tor}}
\newcommand{\id}{\mathrm{id}}
\newcommand{\so}{\mathrm{SO}}
\newcommand{\im}{\mathrm{im}}
\newcommand{\supp}{\mathrm{supp}}
\newcommand{\vr}[2]{\mathrm{VR}(#1;#2)}
\newcommand{\vrm}[2]{\mathrm{VR}^\mathrm{m}(#1;#2)}
\newcommand{\cech}[2]{\mathrm{\check{C}}(#1;#2)}
\newcommand{\cechm}[2]{\mathrm{\check{C}}^\mathrm{m}(#1;#2)}
\newcommand{\complex}[1]{\mathbb{C}_{#1}}
\DeclareMathOperator{\relint}{\mathrm{relint}}
\def\pt{\mathrm{pt}}
\begin{document}

\title{Persistent equivariant cohomology}

\author{Henry Adams}
\address[HA]{Department of Mathematics, University of Florida, Gainesville, FL 32611, USA}
\email{henry.adams@ufl.edu}

\author{Evgeniya Lagoda}
\address[EL]{Institut f\"ur Mathematik, Freie Universit\"at Berlin, 14195 Berlin, Germany}
\email{e.lagoda@fu-berlin.de}

\author{Michael Moy}
\address[MM]{Department of Mathematics, Colorado State University, Fort Collins, CO 80523, USA}
\email{michael.moy@colostate.edu}

\author{Nikola Sadovek}
\address[NS]{Institut f\"ur Mathematik, Freie Universit\"at Berlin, 14195 Berlin, Germany}
\email{nikola.sadovek@fu-berlin.de}

\author{Aditya De Saha}
\address[AdS]{Department of Mathematics, University of Florida, Gainesville, FL 32611, USA}
\email{a.desaha@ufl.edu}

\begin{abstract}
This article has two goals.
First, we hope to give an accessible introduction to persistent equivariant cohomology.
Given a topological group $G$ acting on a filtered space, persistent Borel equivariant cohomology measures not only the shape of the filtration, but also attributes of the group action on the filtration, including in particular its fixed points.
Second, we give an explicit description of the persistent equivariant cohomology of the circle action on the Vietoris--Rips metric thickenings of the circle, using the Serre spectral sequence and the Gysin homomorphism.
Indeed, if $\frac{2\pi k}{2k+1} \le r < \frac{2\pi(k+1)}{2k+3}$, then 
$H^*_{S^1}(\vrm{S^1}{r})\cong
\Z[u]/(1\cdot3\cdot5\cdot\ldots \cdot (2k+1)\, u^{k+1})$
where $\deg(u)=2$.
\end{abstract}

\maketitle

\setcounter{tocdepth}{1}
\tableofcontents

\section{Introduction}
In this paper, we give an accessible introduction to persistent Borel equivariant cohomology, we explore the relationship between persistent equivariant cohomology and related topics, and we compute the persistent equivariant cohomology of an important initial example: the circle action on the Vietoris--Rips metric thickenings of the circle.
The question ``\emph{What is the persistent equivariant cohomology of the Vietoris--Rips filtration of the circle?}'' arose in applied topology, but its answer requires tools from equivariant homotopy theory.
We hope these ideas will find connections to future research projects at the intersection of applied and equivariant topology.

The setup for persistent Borel equivariant cohomology is as follows.
Let $G$ be a topological group.
We say that a space $Y$ is a \emph{$G$-space} if $Y$ is equipped with a group action $G \times Y \to Y$.
We say that 
\[Y_1 \hookrightarrow Y_2 \hookrightarrow \ldots \hookrightarrow Y_{k-1} \hookrightarrow Y_k\]
is a \emph{filtration of $G$-spaces} if each $Y_i$ is a $G$-space, and if each inclusion $h_i \colon Y_i \hookrightarrow Y_{i+1}$ is $G$-equivariant, i.e.\ if $h_i(g\cdot y)=g\cdot h_i(y)$ for all $y\in Y_i$ and $g \in G$.
If we apply the equivariant cohomology functor $H^*_G(\cdot)$ to a filtered $G$-space, then by contravariance we obtain the sequence
\[H^*_G(Y_k) \to H^*_G(Y_{k-1}) \to \ldots \to H^*_G(Y_2) \to H^*_G(Y_1), \]
which we refer to as a \emph{persistent equivariant cohomology module}.
We also work with filtrations and persistent equivariant cohomology modules that are indexed by $\mathbb{R}$, in an analogous way.

Suppose that $X$ is a metric space equipped with an action $G\times X\to X$ of the group $G$ by isometries.
We can build a filtration on top of $X$ using Vietoris--Rips metric thickenings.
Indeed, the \emph{Vietoris--Rips metric thickening} $\vrm{X}{r}$ consists of all finitely-supported probability measures on $X$ whose support have diameter at most $r$~\cite{AAF}.
The space $\vrm{X}{r}$ inherits a $G$ action from the $G$ action on $X$.
Therefore, given an increasing sequence of scale parameters $r_1\le \ldots\le r_k$, we obtain a filtration
\[\vrm{X}{r_1} \hookrightarrow \vrm{X}{r_2} \hookrightarrow \ldots \hookrightarrow \vrm{X}{r_{k-1}} \hookrightarrow \vrm{X}{r_k}\]
and hence a persistent equivariant cohomology module
\[H^*_G(\vrm{X}{r_k}) \to H^*_G(\vrm{X}{r_{k-1}}) \to \ldots \to H^*_G(\vrm{X}{r_2}) \to H^*_G(\vrm{X}{r_1}).\]

As our primary example, in this paper we determine the persistent equivariant cohomology of the Vietoris--Rips metric thickenings of the circle.
Note that the circle $S^1$ is both a group and a metric space.
The group $S^1$ therefore acts on the Vietoris--Rips metric thickening $\vrm{S^1}{r}$.
The action is free for $r<\frac{2\pi}{3}$, but has fixed points of the form $\frac{1}{3}\delta_\theta+\frac{1}{3}\delta_{\theta+2\pi/3}+\frac{1}{3}\delta_{\theta+4\pi/3}$ once $r\ge\frac{2\pi}{3}$.
Here $\theta \in \mathbb{R}$ is any angle and $\delta_\theta$ is the Dirac delta mass at $e^{i \theta} \in S^1$.
Then more fixed points (of the form $\sum_{j=0}^4 \frac{1}{5}\delta_{\theta+2\pi j/5}$) appear once $r\ge\frac{4\pi}{5}$, and then even more fixed points (of the form $\sum_{j=0}^6 \frac{1}{7}\delta_{\theta+2\pi j/7}$) appear once $r\ge\frac{6\pi}{7}$, etc.
We determine the equivariant cohomology $H^*_{S^1}(\vr{S^1}{r})$ %\coloneqq H^*(\vr{S^1}{r}_{S^1})$ 
for all values of the scale parameter $r$, as well as the induced morphisms $H^*_{S^1}(\vr{S^1}{r'}) \to H^*_{S^1}(\vr{S^1}{r})$ for all pairs of values of the scale parameter $r\le r'$.

\begin{restatable}{theorem}{thmMain}
\label{thm:main}
The persistent equivariant cohomology of the Vietoris--Rips thickenings of the circle is
\[
H^*_{S^1}(\vrm{S^1}{r};\Z)\cong
\begin{cases}
\Z[u]/(m_0\cdot\ldots \cdot m_k\, u^{k+1}) & \text{if }\frac{2\pi k}{2k+1}\le r<\frac{2\pi(k+1)}{2k+3}\text{ for }k\in \{0,1,2,\ldots\} \\
\Z[u] & \text{if } r \ge \pi,
\end{cases}
\]
where $\deg(u)=2$, and where $m_j=2j+1$.
If $r\le r'$, and if 
\[
\tfrac{2\pi k}{2k+1}\le r<\tfrac{2\pi(k+1)}{2k+3} ~~ \textrm{ and } ~~ \tfrac{2\pi k'}{2k'+1}\le r'<\tfrac{2\pi(k'+1)}{2k'+3},
\]
then necessarily $k\le k'$, and the induced map
\[
\Z[u]/(m_0\cdot\ldots \cdot m_{k'}\, u^{k'+1}) \cong H^*_{S^1}(\vrm{S^1}{r'};\Z)\to H^*_{S^1}(\vrm{S^1}{r};\Z)\cong \Z[u]/(m_0\cdot\ldots \cdot m_k\, u^{k+1})
\]
is the projection induced by the identity on $\Z[u]$.
\end{restatable}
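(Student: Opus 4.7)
The plan combines three ingredients: an $S^1$-equivariant identification of $\vrm{S^1}{r}$ with a weighted sphere, the Serre spectral sequence of the associated Borel fibration, and a Gysin/Euler class computation for the single key differential.

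First, I would start from the (non-equivariant) homotopy equivalence $\vrm{S^1}{r}\simeq S^{2k+1}$ for $\tfrac{2\pi k}{2k+1}\le r<\tfrac{2\pi(k+1)}{2k+3}$ (and contractibility for $r\ge\pi$), and refine it to an $S^1$-equivariant homotopy equivalence with the weighted sphere $S^{2k+1}\subset\C^{k+1}$ carrying the action $t\cdot(z_0,\ldots,z_k)=(tz_0,t^3z_1,\ldots,t^{2k+1}z_k)$. The weights $1,3,\ldots,2k+1$ are dictated by the isotropy structure noted in the introduction: the measure $\tfrac{1}{2j+1}\sum_{l=0}^{2j}\delta_{\theta+2\pi l/(2j+1)}$, which is fixed by $\Z_{2j+1}\subset S^1$ but rotates freely under the full $S^1$, corresponds to the $j$-th coordinate circle in $\C^{k+1}$, on which $S^1$ acts by weight $2j+1$. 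A natural realization of this equivalence is via a moment-curve type embedding $S^1\hookrightarrow S^{2k+1}$, $\theta\mapsto\tfrac{1}{\sqrt{k+1}}(e^{i\theta},e^{3i\theta},\ldots,e^{(2k+1)i\theta})$, extended to measures and composed with an equivariant deformation retraction of $\vrm{S^1}{r}$ onto a cyclic-polytope-type model.

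Next, I apply the Serre spectral sequence to the Borel fibration $S^{2k+1}\hookrightarrow ES^1\times_{S^1}S^{2k+1}\to BS^1$. Since $H^*(S^{2k+1};\Z)$ is concentrated in degrees $0$ and $2k+1$, the $E_2$-page has only two nonzero rows: $E_2^{*,0}\cong\Z[u]$ and $E_2^{*,2k+1}\cong\Z[u]\cdot v$, with $\deg(u)=2$ and $v$ the fundamental class of the fiber. The only possibly nontrivial differential is $d_{2k+2}(v)\in H^{2k+2}(BS^1)=\Z\cdot u^{k+1}$. For $r\ge\pi$, the thickening is equivariantly contractible (straight-line interpolation of measures is $S^1$-equivariant), so the spectral sequence collapses trivially to $H^*(BS^1)\cong\Z[u]$.

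The main obstacle is identifying $d_{2k+2}(v)$, which I would handle via the Gysin homomorphism for sphere bundles. Concretely, $S^{2k+1}$ is the unit sphere of the $S^1$-representation $\C^{k+1}$ with weights $(1,3,\ldots,2k+1)$, so the Borel fibration is the unit sphere bundle of the corresponding equivariant complex vector bundle over $BS^1$; the differential $d_{2k+2}$ is then cup product with the equivariant Euler class, which is the product of the weight characters, namely $\prod_{j=0}^{k}(2j+1)\,u=1\cdot3\cdots(2k+1)\,u^{k+1}$. Substituting back and collapsing the spectral sequence yields the claimed ring $\Z[u]/(m_0\cdots m_k\,u^{k+1})$. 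For the persistence maps, note that $u\in H^2(BS^1)$ is natural with respect to any $S^1$-equivariant map: an inclusion $\vrm{S^1}{r}\hookrightarrow\vrm{S^1}{r'}$ induces a morphism of Borel fibrations over $BS^1$ and hence a morphism of spectral sequences carrying $u$-generator to $u$-generator; on $H^*_{S^1}$ this is the canonical quotient map $\Z[u]/(m_0\cdots m_{k'}\,u^{k'+1})\to\Z[u]/(m_0\cdots m_k\,u^{k+1})$, which is well-defined because $k\le k'$ forces $m_0\cdots m_k\,u^{k+1}$ to divide $m_0\cdots m_{k'}\,u^{k'+1}$.
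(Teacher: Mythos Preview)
Your overall architecture is exactly the paper's: (i) replace $\vrm{S^1}{r}$ $S^1$-equivariantly by the weighted sphere $S^{2k+1}$ with weights $(1,3,\ldots,2k+1)$, (ii) run the Serre spectral sequence of the Borel fibration, (iii) identify the single transgression via the Gysin sequence with the equivariant Euler class $\prod_{j=0}^k (2j+1)\,u^{k+1}$, and (iv) get functoriality from naturality of the spectral sequence over $BS^1$. Steps (ii)--(iv) in your proposal are essentially complete and match the paper's Section~6 and Remark~6.3.

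The genuine gap is step~(i). Saying ``extend the moment-curve embedding to measures and compose with an equivariant deformation retraction onto a cyclic-polytope-type model'' is not a proof; it is a wish. Extending $\theta\mapsto(e^{i\theta},\ldots,e^{(2k+1)i\theta})$ linearly to measures lands in the disk, and you must show the image of $\vrm{S^1}{r}$ avoids the origin for \emph{all} $r$ in the interval (not just near the first critical scale), that radial projection to $S^{2k+1}$ is an $S^1$-map which is a homotopy equivalence, and that these equivalences are compatible across the inclusions $\vrm{S^1}{r}\hookrightarrow\vrm{S^1}{r'}$. None of this is automatic, and the known non-equivariant equivalence from \cite{moyVRmS1} is built through a quotient construction, not through a centroid/moment map. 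The paper devotes all of Section~7 to this step: it passes through Moy's quotient $\vrm{S^1}{r}\to X_{2k+1}$ (observed to be $S^1$-equivariant), builds an explicit $S^1$-map $\varphi_{2k+1}\colon X_{2k+1}\to S^1_1*\cdots*S^1_{2k+1}$ recursively via pushout squares, and then proves it is an $S^1$-homotopy equivalence not directly but by the criterion (Lemma~7.1, via equivariant ANR theory) that it suffices to check ordinary homotopy equivalence on every fixed-point set $(-)^{\Z_n}$. That last check is itself nontrivial and uses an $n$-fold covering ``pullback'' map on measures to reduce to the non-equivariant case with a smaller $k$. Your proposal contains none of this machinery, and your one-line substitute does not obviously work. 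Also, for the persistence statement you need the equivariant equivalences at different scales to fit into a commuting ladder with the inclusions (the paper's diagram~(7.1)); this has to be verified for whatever maps you construct, not just asserted from naturality of $u$.
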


This map is well-defined since $k\le k'$ means that $m_0\cdot\ldots \cdot m_k\, u^{k+1}$ divides evenly into $m_0\cdot\ldots \cdot\, m_{k'} u^{k'+1}$.
If $r'\ge \pi$ then the induced map
\[
    \Z[u]\cong H^*_{S^1}(\vrm{S^1}{r'};\Z) \longrightarrow H^*_{S^1}(\vrm{S^1}{r};\Z)\cong \Z[u]/(m_0\cdot\ldots \cdot m_k\, u^{k+1})
\]
is the canonical projection, and if also $r\ge \pi$ then the induced map 
\[
    \Z[u]\cong H^*_{S^1}(\vrm{S^1}{r'};\Z) \longrightarrow H^*_{S^1}(\vrm{S^1}{r};\Z)\cong \Z[u]
\]
is the identity.

Given a space $Y$, cohomology is a classical invariant to measure the ``shape'' of the space.
When a space $Y$ admits an action by a group $G$, equivariant cohomology measures not only the space $Y$ but also properties of the group action.
How does this work when one begins not with a topological space $Y$, but instead with dataset $X$?
Datasets are typically discrete, and hence have no interesting topology upon first glance.
However, persistent cohomology (and Vietoris--Rips thickenings) are a way to first turn a dataset into a growing sequence of spaces, and then to compute the cohomological invariants of this filtration.
When a dataset $X$ admits an action by a group $G$, persistent equivariant cohomology measures not only the ``shape'' of the dataset, but also properties of the group action --- i.e.\, properties of the symmetries within the dataset.

Part of our motivation is set in the context of the more general problem of determining the (equivariant) homotopy type of $\vr{S^n}{r}$ and $\vrm{S^n}{r}$, the Vietoris--Rips complexes and metric thickenings of spheres.
The paper~\cite{AAF} shows that
\[
    \vrm{S^n}{r} \simeq \begin{cases}
        S^n & 0 \le r < r_n\\
        S^n* \frac{SO(n)}{A_{n+2}} & r = r_n,
    \end{cases}
\]
where $r_n$ is the diameter of the standard simplex $\Delta_{n+1}$ with vertices on $S^n$, and $A_{n+2} \subseteq SO(n)$ is the group of rotational symmetries of $\Delta_{n+1}$.
To be able to go beyond the constraints $0 \le r \le r_n$, it is likely that a more symmetry-preserving approach would be necessary. As a first step in this direction, we obtain the equivariant homotopy type in the case $n=1$.

In Section~\ref{sec:ec}, we give a self-contained introduction to equivariant cohomology, along with pointers to standard references with more information.
In Section~\ref{sec:pec}, we introduce persistent equivariant cohomology, while also describing related work along with motivating potential applications to datasets equipped with symmetries or approximate symmetries.
We describe an incorrect guess (and admittedly, our first guess) for $H^*_{S^1}(\vrm{S^1}{r};\Z)$ in Section~\ref{sec:guess}, in order to demonstrate that the $S^1$ action on $\vrm{S^1}{r}$ is \emph{not} the same as the most common $S^1$ action on odd spheres.
In Section~\ref{sec:pec-vr}, we outline the proof of Theorem~\ref{thm:main}, which consists of two steps.
First, in Section~\ref{sec:epc-S1join}, we use the Serre spectral sequence and the Gysin homomorphism to identify the equivariant cohomology of a particular $S^1$ action on an odd-dimensional sphere, realized as a join of multiple circles.
Second, in Section~\ref{ssec:S1-he}, we show that $\vrm{S^1}{r}$ is $S^1$-homotopy equivalent to this particular join of circles, thus completing the proof of Theorem~\ref{thm:main}.
We conclude with open questions in Section~\ref{sec:conclusion}.

\section{What is \ldots equivariant cohomology?}
\label{sec:ec}

We follow %Loring W.\ 
Tu's article ``\emph{What is \ldots equivariant cohomology?}''~\cite{tu2013equivariant}, which is part of the ``\emph{What is \ldots}'' series in the Notices of the American Mathematical Society.
For more complete references on equivariant cohomology, we refer the reader to~\cite{may1996equivariant,tom2006transandrepres,schwede,nlab:equivariant_cohomology}.
If $G$ is a topological group acting on a space $Y$, then the first attempt at a definition of the equivariant cohomology might be to define it as the standard cohomology of the quotient space $Y/G$.
However, this behaves poorly, in particular when the action of $G$ on $Y$ has fixed points.
Instead, we find a contractible space $EG$ on which the group $G$ acts \emph{freely}, i.e.\ without fixed points.
Then $EG\times Y$ still has the same homotopy type as $Y$.
Furthermore, the diagonal action of $G$ on $EG\times Y$ (defined by $g\cdot(z,y)=(g\cdot z,g\cdot y)$) is guaranteed to be free, even if the action of $G$ on $Y$ alone is not.
The \emph{homotopy quotient} $EG \times_G Y$ is defined to be the quotient of $EG\times Y$ by the free diagonal action of $G$.
Then, we define the \emph{equivariant cohomology} of the action of $G$ on $Y$ to be $H_G^*(Y) \coloneqq H^*(EG \times_G Y)$.
In other words, the equivariant cohomology $H_G^*(Y)$ is the standard (singular) cohomology of the homotopy quotient $EG \times_G Y$.

Given a $G$-equivariant map $Y\to Y'$, we obtain a $G$-equivariant map $EG\times Y \to EG\times Y'$ and a map $EG \times_G Y \to EG \times_G Y'$.
By the contravariance of cohomology, we get an induced morphism $H^*(EG \times_G Y')\to H^*(EG \times_G Y)$, or $H_G^*(Y') \to H_G^*(Y)$.
Also, given a subgroup $G$ of $G'$, the inclusion $G\hookrightarrow G'$ induces a morphism $H_{G'}^*(Y) \to H_G^*(Y)$.

In general, suppose we have a $G$-space $X$ and a $G'$-space $X'$ for two groups $G$ and $G'$, and suppose we have a group homomorphism $\varphi \colon G \to G'$ and a continuous map $f \colon X \to X'$ satisfying $f(g \cdot x) = \varphi (g) \cdot f(x)$.
Then this induces a map $H^*_{G'}(X') \to H^*_G(X)$~\cite[Section~1.3]{anderson2011}.
In particular, for a fixed $G$, the equivariant cohomology $H^*_G(-)$ is a contravariant functor on the category of $G$-spaces.
Also, for a given $G'$-space $X$, any group homomorphism $\varphi \colon G \to G'$ induces a homomorphism $H^*_{G'}(X) \to H^*_G(X)$, where $X$ is also a $G$-space via $\varphi$.

A case of particular importance is when the space is $\textrm{pt}$, a single point.
The homotopy quotient $(EG\times \pt)/G=EG/G$ has a more recognizable name $BG$, the \emph{classifying space} of the group $G$.
The map $EG\to BG$ appears in homotopy theory as a \emph{universal principal $G$-bundle}, of which every principal $G$-bundle is a pullback~\cite{milnor1974characteristic,hatcher2003vector,RudolphSchmidt2017}.
Since any $G$-space $Y$ has a $G$-equivariant map $Y\to \pt$, we obtain a morphism $H^*(BG)=H^*_G(\pt)\to H^*_G(Y)$.
Hence, the equivariant cohomology $H^*_G(Y)$ is a module over the ring $H^*(BG)$.
Furthermore, given a $G$-equivariant map $Y\to Y'$, the induced map $H_G^*(Y') \to H_G^*(Y)$ is a $H^*(BG)$-module morphism.

At the end of the 1930s, Paul Althaus Smith initiated the study of transformation groups using tools from algebraic topology.
He proved several results about the fixed points of a finite $p$-group action.
Results of this type became known as Smith Theory~\cite{smith_theory}.
Interested in Smith Theory, Armand Borel organized in 1958--1959 a seminar on transformation groups, the central object of which was what is now known as the \emph{Borel construction}, $EG\times_G X$, and its cohomology~\cite{Borel_memoir_Goresky}.
Typical problems, which were considered, were centered around the topological properties of a compact lie group $G$, a $G$-space $X$, its fixed points, its orbits, and the orbit space. 

An important tool in equivariant cohomology is the localization theorem of Borel--Atiyah--Segal type, formulated in the 1960s.
It establishes the relationship between localized equivariant cohomology of a nice-enough $G$-space $X$, where $G$ is a compact lie group, and localized equivariant cohomology of its fixed point spaces~\cite{Hsiang1975}.
In particular, for the case when $G$ is $(S^1)^k$ or $\mathbb Z_p^k$, for a prime $p$, the localization theorem allows to determine the equivariant cohomology of a space in terms of the (non-equivariant) cohomology of the fixed point space and gives a criterion to establish the existence of fixed points.
Another powerful consequence of the localization theorem is Berline--Vergne--Atiyah--Bott localization formula, which was formulated in the 1980s simultaneously by Michael Atiyah and Raoul Bott~\cite{AtiyahBott1984} and by Mich\'ele Vergne and Nicole Berline~\cite{BerlineVergne1983}.
Existing now in a variety of generalizations, localization formulae enable the integration of an equivariant form over a $G$-space by integrating equivariant forms over fixed points.
This equivariant integration technique found multiple uses in mathematics and physics~\cite{AndersonFulton2024, Cordes_et_al_1996,  Pestun_2017, Szabo2000, Vergne2007}. 

On the other hand, equivariant cohomology has proven to be one of the main tools in topological combinatorics.
In this field, broadly speaking, problems originating from combinatorial or discrete geometry are often solved using topological methods.
One of the earliest applications of such methods to a combinatorial problem was the solution of Kneser's conjecture by Lov\'asz~\cite{Lovasz1978} in 1978.
Other notable examples include Tverberg-type problems~\cite{BSS81, BMZ15, Ozaydin87, Tverberg66}, mass partition problems~\cite{Avis84, BFHZ18, Grunbaum60, Hadwiger66, Ramos96, RPS22}, the Nandakumar--Ramana Rao problem \cite{akopyan2018convex, BS23+, blagojevic2014convex, karasev2014convex}, necklace splitting problems~\cite{AA89, Alon87, HR65}, and Knaster-type questions~\cite{KV10, Knaster47, Volovikov93}, among others.

The original combinatorial or geometric problem is typically reduced to proving that a certain equivariant map $f \colon X \longrightarrow_G Y$ has an image in $Y$ which non-trivially intersects a certain subspace $Z \subseteq Y$.
The goal is then to show non-existence of an equivariant map $X \longrightarrow_G Y \setminus Z$.
For a gentle introduction to this method and a plethora of applications, we refer to the book by Matou\v sek~\cite{matousek2003using} and to~\cite{Zivaljevic1996, Zivaljevic1998}.

The non-existence of equivariant maps is a general problem that can be approached through various methods.
These include equivariant obstruction theory~\cite{tom1987transgroups}, where obstructions to the existence of equivariant maps live in the $G$-equivariant cohomology of $X$, possibly with twisted coefficients.
Some applications of this method are found in~\cite{BFHZ16, BMZ15, BS23+, BZ14, FMSS24}. 

Another approach to showing non-existence of equivariant maps is via computations of a cohomological ideal-valued index called the Fadell--Husseini index~\cite{Fadell1988}.
(See also \cite[Section~6]{matousek2003using} for applications of a numerical-valued index.)
Namely, the Fadell--Husseini index of a $G$-space $X$ is an ideal $\textrm{ind}_G(X) = \ker(H^*_G(\pt) \to H^*_G(X))$, and an existence of an equivariant map $X \longrightarrow_G Y \setminus Z$ would imply $\textrm{ind}_G(Y \setminus Z) \subseteq \textrm{ind}_G(X)$.
This provides another obstruction to the existence of such a map.
Applications of the Fadell--Husseini index, along with related computations, can be found in~\cite{BLZ15, BPSZ19, BS18, KV10, Volovikov93}.
In both of these methods, equivariant cohomology plays a central role.

\section{What is \ldots \emph{persistent} equivariant cohomology?}
\label{sec:pec}

First, let us describe persistent cohomology in the non-equivariant setting.
A \emph{filtration of spaces} $\{Y_r\}_{r\in \R}$ is a functor from $(\R,\le)$ to topological spaces.
In other words, for each $r\in \R$ we have a space $Y_r$, and for each $r\le r'$ we have a morphism $Y_r \to Y_{r'}$ satisfying certain commutativity conditions.\footnote{
The mentioned commutativity conditions are $h_{r,r}=id_{Y_r}$ and $h_{r',r''}\circ h_{r,r'}=h_{r,r''}$ for all $r\le r'\le r''$.
}
Let $\F$ be a field.
If we apply the cohomology functor $H^*(\cdot;\F)$ to a filtration, then we obtain a cohomology group $H^*(Y_r)$ for each $r\in \R$, and by contravariance we obtain a morphism $H^*(Y_{r'}) \to H^*(Y_r)$ for each $r\le r'$.
We refer to this as the data of a \emph{persistent cohomology module}~\cite{EdelsbrunnerHarer,zomorodian2005computing,de2011dualities}.
See Weinberger's article ``\emph{What is \ldots persistent homology?}''~\cite{weinberger2011persistent} 
for connections from persistent (co)homology to geometric group theory, to the cohomological dimension of a discrete group, to the loop space of a Riemannian manifold, and to the amount nullhomotopic loops need to be expanded in order to be contracted.
See~\cite{polterovich2020topological} for connections between persistent homology and symplectic geometry.

Now, we move to the equivariant setting.
We refer the reader to Maia Fraser's paper~\cite{fraser2015contact} for an early appearance of persistent equivariant cohomology, and to the papers~\cite{Chacholski2023, Conti2022, Frosini2014, Frosini2016, Zhang2019} for other directions in equivariant persistence.
Let $G$ be a topological group.
A \emph{filtration of $G$-spaces} $\{Y_r\}_{r\in \R}$ is a functor from $(\R,\le)$ to $G$-spaces.
In other words, for each $r\in \R$ we have a $G$-space $Y_r$, and for each $r\le r'$ we have a $G$-equivariant morphism $Y_r \to Y_{r'}$ satisfying certain commutativity conditions.\footnote{
\emph{$G$-equivariant} means that $h_{r,r'\colon} Y_r \to Y_{r'}$ satisfies $h_{r,r'}(g\cdot y) = g\cdot h_{r,r'}(y)$ for all $g\in G$ and $y\in Y_r$.
}
If we apply the equivariant cohomology functor $H^*_G(-;\F)$ to a filtered $G$-space, then we obtain an equivariant cohomology group $H^*_G(Y_r)$ for each $r\in \R$, and by contravariance we obtain a morphism $H^*_G(Y_{r'}) \to H^*_G(Y_r)$ for each $r\le r'$, again satisfying commutativity conditions.
We refer to this data as a \emph{persistent equivariant cohomology module}.

Let $X$ be a metric space.
Recall the \emph{Vietoris--Rips metric thickening}~\cite{AAF} consists of all finitely-supported probability measures on $X$ whose support have diameter at most $r$, namely
\[\vrm{X}{r}\coloneqq \left\{\sum_{j=0}^n a_j \delta_{x_j}~\Bigg|~n\ge 0,\ a_j\ge 0,\ \sum_j a_j=1,\ \diam(\{x_0,\ldots,x_n\})\le r\right\}.\]
The space $\vrm{X}{r}$ is equipped with an optimal transport metric~\cite{vershik2013long,villani2003topics,villani2008optimal}.
Though the reader may be more familiar with the Vietoris--Rips simplicial complex, the Vietoris--Rips metric thickening and the Vietoris--Rips simplicial complex are homeomorphic when $X$ is finite and homotopy equivalent when $X$ is discrete; see Propositions~6.2 and~6.6 of~\cite{AAF}.

We have a filtration $\{\vrm{X}{r}\}_{r\in \R}$ of Vietoris--Rips metric thickenings, since for all $r\le r'$ we have an inclusion $\vrm{X}{r}\hookrightarrow\vrm{X}{r'}$.
If $X$ is equipped with an action of a group $G$ by isometries, then each metric thickening $\vrm{X}{r}$ is equipped with an induced $G$ action defined by $g\cdot(\sum_j a_j \delta_{x_j})=\sum_j a_j \delta_{g\cdot x_j}$.
Hence, we obtain an associated persistent equivariant cohomology module $\{H_G^*(\vrm{X}{r})\}_{r\in \R}$.

Suppose there exists a value $r_\infty\in\R$ such that $\vrm{X}{r}$ is contractible for all $r\ge r_\infty$.
This happens, for example, if $X$ is bounded and $r_\infty=\diam(X)$.
Famously, Rips proved that this alternatively happens if $X$ is a $\delta$-hyperbolic group equipped with the word metric and $r_\infty=4\delta$~\cite{Gromov1987}.
If such an $r_\infty$ exists, then the map $Y\to\pt$ mentioned in the last paragraph of Section~\ref{sec:ec} (showing that the equivariant cohomology $H^*_G(Y)$ is a module over the ring $H^*(BG)$) has a realization up to homotopy as the map $\vrm{X}{r}\to\vrm{X}{r_\infty}\simeq\pt$.
So, in the setting where the Vietoris--Rips metric thickenings $\vrm{X}{r}$ eventually become $G$-contractible as $r$ increases, persistent equivariant cohomology is a way to break up the map $\vrm{X}{r}\to\pt$ into a sequence of steps 
\[\vrm{X}{r_1} \hookrightarrow \vrm{X}{r_2} \hookrightarrow \vrm{X}{r_3} \hookrightarrow \ldots \hookrightarrow \vrm{X}{r_\infty} \simeq \pt,\]
enabling the study of how the equivariant cohomology approaches that of $H^*(BG)=H^*_G(\pt)$ as the scale parameter increases.

If the action of $G$ on $\vrm{X}{r_\infty}\simeq\pt$ is also free then $EG$ can be taken to be $\vrm{X}{r_\infty}$.
This was done by Rips when $G=X$ is a torsion-free $\delta$-hyperbolic group equipped with the (discrete) word metric to obtain finite-dimensional classifying spaces $BG=EG/G$~\cite[III.G.3, Theorem~3.21, Page~468]{bridson2011metric}; see also~\cite{Gromov1987,Gromov,ghys2013groupes}.
For further connections between Vietoris--Rips thickenings and geometric group theory, we refer the reader to the paper~\cite{zaremsky2022bestvina} 
and talk~\cite{Zaremsky-VRtalk2021} 
by Zaremsky, which explain how a finitely generated group $G$ is of type $F_n$ if and only if its Vietoris--Rips thickenings are essentially $(n-1)$-connected; see also~\cite{alonso1992combings,alonso1994finiteness,brown1987finiteness}.

As mentioned in the introduction, just as persistent cohomology has found applications in data science in determining the ``shape'' of data~\cite{Carlsson2009,EdelsbrunnerHarer,edelsbrunner2000topological,robins2000computational}, we hope that persistent equivariant cohomology will find applications when a dataset is equipped with symmetries or approximate symmetries.
Indeed, such a symmetry can be encoded via a group action  (or approximate group action) on a dataset.
We refer the reader to~\cite{graczyk2022model,huang2024approximately,korman2015probably,mgp_approx_symm_sig_06,bergomi2019towards,bocchi2023finite} for work on finding symmetries or approximate symmetries in a dataset, which can be used for example to inpaint or fill-in missing regions of the data.

A particular example of data with symmetries occurs in crystalline structures in materials science.
A crystal might consist of a periodic or near-periodic pattern of atoms.
Given a choice of fundamental domain for a crystal, one can reduce the study of the entire crystal to the easier task of studying only the atoms inside a single fundamental domain, remembering the identifications on the boundary of that fundamental domain.
However, when symmetries are only approximate, it is not clear what the correct fundamental domain should be.
One would like a data analysis technique that is stable to perturbations in the data, and which therefore should be stable to the choice of two different reasonable fundamental domains~\cite{edelsbrunner2021density}.
For work along these lines in the direction of persistent homology, we refer the reader to~\cite{HeissPreprint,HeissTalk,onus2022quantifying}.

In the paper~\cite{GH-BU-VR}, we studied the invariant
\[c_{1,2k+1}\coloneqq \inf\{ r > 0~|~\text{there exists a }\Z/2\text{ map }S^{2k+1} \to VR(S^1;r) \},\]
and connected it to Gromov--Hausdorff distances between spheres (see also~\cite{lim2022gromov,harrison2023quantitative}).
We are also interested in variants thereof, such as 
\[\inf\{ r > 0~|~\text{there exists an }S^1\text{ map }S^{2k+1} \to VR(S^1;r) \},\] 
which of course might depend on the $S^1$ action one places on the odd sphere $S^{2k+1}$.

Let $G$ be a group acting properly and by isometries on a metric space $X$.
In~\cite{AdamsHeimPeterson}, the authors study when $\vrm{X}{r}/G$ is homeomorphic to $\vrm{X/G}{r}$, i.e., when the quotient of the Vietoris--Rips thickening is the Vietoris--Rips thickening of the quotient.
In this setting, if the induced action of $G$ on $\vrm{X}{r}$ is free, then the equivariant cohomology $H^*_G(\vrm{X}{r})$ is equal to the standard cohomology $H^*(\vrm{X}{r}/G)=H^*(\vrm{X/G}{r})$.

See~\cite{varisco2021equivariant} for an equivariant version of Bestvina--Brady Morse theory, applied to Vietoris--Rips complexes.

\section{An incorrect guess for $H^*_{S^1}(\vrm{S^1}{r};\Z)$}
\label{sec:guess}

It is well known that the classifying space of the circle is $BS^1=\CP^\infty$, the infinite-dimensional complex projective space.
One way to see this is as follows.
We view each odd-dimensional sphere $S^{2k+1}$ as the unit sphere in $\C^{k+1}$.
The circle $S^1\subseteq \C$ acts on $\C^{k+1}$ by scalar multiplication $\lambda\cdot(z_1,\ldots,z_{k+1})=(\lambda z_1,\ldots,\lambda z_{k+1})$.
The induced action of $S^1$ on $S^{2k+1}$ is free, and the complex projective space $\CP^k$ is defined to be the quotient of $S^{2k+1}$ under this group action.
Furthermore, each inclusion $S^{2k+1}\hookrightarrow S^{2k'+1}$ for $k\le k'$, defined via $(z_1,\ldots,z_{k+1}) \mapsto (z_1,\ldots,z_{k+1},0,\ldots,0)$, is $S^1$-equivariant.
If we let $S^\infty$ be the union $S^\infty=\cup_{k=0}^\infty S^{2k+1}$, then this is a contractible space on which $S^1$ acts freely, and hence can be chosen to be the total space $ES^1$.
Then $\CP^\infty=\cup_{k=0}^\infty \CP^{k}$ is the quotient of $ES^1=S^\infty$ under this free action by $S^1$, and so $BS^1=\CP^\infty$ is the classifying space of the circle.

So, consider the $S^1$-equivariant ``odd sphere filtration''
\begin{equation}
\label{eq:odd-sphere}
    S^1\hookrightarrow S^3\hookrightarrow S^5\hookrightarrow S^7\hookrightarrow \ldots \hookrightarrow S^\infty,
\end{equation}
and apply $H_{S^1}^*$ to obtain the persistent equivariant cohomology module
\[
    H_{S^1}^*(S^\infty)\longrightarrow\ldots\longrightarrow H_{S^1}^*(S^7)\longrightarrow H_{S^1}^*(S^5)\longrightarrow H_{S^1}^*(S^3)\longrightarrow H_{S^1}^*(S^1).
\]
Since $S^\infty$ is contractible, the first term in this module is $H_{S^1}^*(S^\infty) \cong H^*_{S^1}(pt) = H^*(BS^1) \cong H^*(\CP^\infty)$, the standard cohomology of the classifying space of the circle, $\CP^\infty$.
And, since the action of $S^1$ on each odd sphere $S^{2k+1}$ is free, the equivariant cohomology $H_{S^1}^*(S^{2k+1})$ is simply the standard cohomology of the quotient $\CP^k$~\cite[I, Lemma 4.9]{Tu2020}.
So, the terms of this persistent equivariant cohomology module are
\[
H^*_{S^1}(S^{2k+1};\Z)=H^*(\CP^k;\Z)\cong
\begin{cases}
\Z[u]/(u^{k+1}) & \text{if }k<\infty \\
\Z[u] & \text{if } k=\infty,
\end{cases}
\]
where $\deg(u)=2$.
Furthermore, for positive integers $k\le k'$, inclusion of spheres induces the canonical projection
\[
    \Z[u]/(u^{k'+1})\cong H^*_{S^1}(S^{2k'+1};\Z) \longrightarrow H^*_{S^1}(S^{2k+1};\Z)\cong \Z[u]/(u^{k+1})
\]
in equivariant cohomology.
If $k'=\infty$ then this map is also the canonical projection $\Z[u] \longrightarrow \Z[u]/(u^{k+1})$.

The homotopy types of the Vietoris--Rips complex or the Vietoris--Rips metric thickenings of the circle are known be the circle, the 3-sphere, the 5-sphere, the 7-sphere, \ldots, indeed obtaining all odd spheres as the scale increases~\cite{AA-VRS1,moyVRmS1}.
More explicitly, by~\cite{moyVRmS1} we have homotopy equivalences
\[
\vrm{S^1}{r}\simeq\begin{cases}
S^{2k+1} & \text{if }\frac{2\pi k}{2k+1}\le r<\frac{2\pi(k+1)}{2k+3} \\
\pt & \text{if } r \ge \pi.
\end{cases}
\]
Hence we have a filtration
\[\begin{tikzcd}
\vrm{S^1}{0} & \vrm{S^1}{\frac{2\pi}{3}} & \vrm{S^1}{\frac{4\pi}{5}} & \vrm{S^1}{\frac{6\pi}{7}} & \ldots & \vrm{S^1}{\pi} \\
{S^1} & {S^3} & {S^5} & {S^7} && {S^\infty}
\arrow[hook, from=1-1, to=1-2]
\arrow["\simeq"', from=1-1, to=2-1]
\arrow[hook, from=1-2, to=1-3]
\arrow["\simeq"', from=1-2, to=2-2]
\arrow[hook, from=1-3, to=1-4]
\arrow["\simeq"', from=1-3, to=2-3]
\arrow[hook, from=1-4, to=1-5]
\arrow["\simeq"', from=1-4, to=2-4]
\arrow[hook, from=1-5, to=1-6]
\arrow["\simeq"', from=1-6, to=2-6]
\end{tikzcd}\] 
Seemingly, there is a close analogy between the Vietoris--Rips metric thickenings of the circle and the odd-sphere filtration~\eqref{eq:odd-sphere}.
This might lead one to conjecture that the persistent equivariant cohomology 
$H^*_{S^1}(\vrm{S^1}{r};\Z)$ of the Vietoris--Rips metric thickenings of the circle is given by $\Z[u]/(u^{k+1})$ if $\frac{2\pi k}{2k+1}\leq r<\frac{2\pi(k+1)}{2k+3}$, where $\deg(u)=2$.
However, this guess is incorrect.
A crucial difference is that even though $\vrm{S^1}{r}$ is homotopy equivalent to an odd sphere for $r<\pi$, the $S^1$ action on $\vrm{S^1}{r}$ is not free for $r\ge \frac{2\pi}{3}$, as we explain in Section~\ref{ssec:S1-he}.
Furthermore, more and more fixed points arise as $r$ increases beyond each critical value $\frac{2\pi}{3}$, $\frac{4\pi}{5}$, $\frac{6\pi}{7}$, \ldots, and
these fixed points have a large effect on the persistent equivariant cohomology.
Indeed, it turns out that 
\[
    H^*_{S^1}(\vrm{S^1}{r};\Z)\cong \Z[u]/(1\cdot 3\cdot \ldots\cdot (2k+1)u^{k+1})
\]
for $\frac{2\pi k}{2k+1}\le r<\frac{2\pi(k+1)}{2k+3}$ (see Theorem~\ref{thm:main}), as we will prove in the following sections.

\section{Persistent equivariant cohomology of $\vrm{S^1}{r}$}
\label{sec:pec-vr}

Let $S^1=\{z\in\C~|~\|z\|=1\}$ be the unit circle in the complex plane.
The circle group is the set $S^1$ with the operation of complex multiplication; we write elements of this group as $\lambda$ or as $\lambda_\theta = e^{i \theta}$ to specify an element by an angle $\theta \in \mathbb{R}$.
The Vietoris--Rips metric thickening of the circle $\vrm{S^1}{r}$ consists of finitely-supported probability measures on $S^1$ with support of diameter at most $r$.
We write measures in $\vrm{S^1}{r}$ as $\sum_{j=0}^n a_j \delta_{\theta_j}$, where $\delta_{\theta_j}$ is the Dirac delta measure at the point $e^{i \theta_j}$.
We will let the circle group act on $\vrm{S^1}{r}$ by rotating measures on the circle:
\[
\lambda_{\theta} \cdot \sum_{j=0}^n a_j \delta_{\theta_j} = \sum_{j=0}^n a_j \delta_{\theta+\theta_j}.
\]

We recall the statement of Theorem~\ref{thm:main}.

\thmMain*

In particular, if $r'\ge \pi$, then the induced map 
\[
\Z[u]=H^*_{S^1}(\vrm{S^1}{r'};\Z)\to H^*_{S^1}(\vrm{S^1}{r};\Z)=\Z[u]/(m_0\cdot\ldots \cdot m_k\, u^{k+1})
\]
is the canonical projection, and if also $r\ge \pi$ then this induced map $\Z[u]=H^*_{S^1}(\vrm{S^1}{r'};\Z)\to H^*_{S^1}(\vrm{S^1}{r};\Z)=\Z[u]$ is the identity.\\

How do we show the theorem?
First, in Theorem~\ref{thm:equivariant-cohomology}, we describe the equivariant cohomology of the $(2k-1)$-dimensional sphere $S^{2k-1}_{m_1,\dots,m_k}$, equipped with a particular $S^1$ action.
By Remark~\ref{rem: naturality}, this description is functorial with respect to inclusion $S^{2k-1}_{m_1,\dots,m_k} \hookrightarrow S^{2k+1}_{m_1,\dots,m_{k+1}}$.
Then, in Theorem~\ref{thm:S^1-equivalence}, we show that for $\frac{2\pi k}{2k+1}\le r<\frac{2\pi(k+1)}{2k+3}$, the metric thickenings $\vrm{S^1}{r}$ are $S^1$-homotopy equivalent to $S^{2k-1}_{m_1,\dots,m_k}$ such that the $S^1$-diagram
\begin{equation*}
    \begin{tikzcd}
        \vrm{S^1}{r} \arrow[r, "\simeq"] \arrow[d, hook] & S^{2k-1}_{m_1,\dots,m_k} \arrow[d, hook]\\
        \vrm{S^1}{r'} \arrow[r, "\simeq"]  & S^{2k+1}_{m_1,\dots,m_{k+1}}
    \end{tikzcd}
\end{equation*}
commutes, where $\frac{2\pi (k+1)}{2k+3}\le r' <\frac{2\pi(k+2)}{2k+5}$.
When combined, Theorems~\ref{thm:S^1-equivalence} and~\ref{thm:equivariant-cohomology}, together with the Remark~\ref{rem: naturality}, give the proof of Theorem~\ref{thm:main}.\\

Regarding different cohomology coefficients, we note the following.
If we employ Remark~\ref{rem: coefficients} instead of Theorem~\ref{thm:equivariant-cohomology}, we get
\[
    H^*_{S^1}(\vrm{S^1}{r};\F)\cong
    \begin{cases}
    \F[u]/(u^{k+1}) & \text{if }\frac{2\pi k}{2k+1}\le r<\frac{2\pi(k+1)}{2k+3}\text{ for }k\in \{0,1,2,\ldots\} \\
    \F[u] & \text{if } r \ge \pi,
    \end{cases}
\]
where $\F$ is $\Q$, $\R$ or $\F_2$.
This isomorphism is also functorial with respect to the parameter $r$.

\section{Equivariant cohomology of $S^1_1*S^1_3*S^1_5*S^1_7*\ldots$}
\label{sec:epc-S1join}

Let $S^1_{2k+1}=S^1$ be the circle equipped with the following $S^1$ action:
$\lambda\cdot z=\lambda^{2k+1}z$ for all $\lambda\in S^1$ and $z\in S^1_{2k+1}$.
We determine the Borel equivariant cohomology of the diagonal $S^1$ action on $S^1_1*S^1_3*S^1_5*\ldots*S^1_{2k+1}$, defined by $\lambda\cdot(z_1,z_3,\ldots,z_{2k+1})=(\lambda z_1,\lambda^3 z_3,\ldots,\lambda^{2k+1} z_{2k+1})$.
We obtain
\[
H^*_{S^1}\left(S^1_1*S^1_3*S^1_5*\ldots*S^1_{2k+1};\Z\right)
\cong
\Z[u]/(1\cdot3\cdot\ldots\cdot (2k+1)\, u^{k+1}),
\]
where $\deg(u)=2$.

More generally, let $m_1, \dots, m_k$ be \emph{any} integers and let $S^{2k-1}_{m_1, \dots, m_k}$ denote the $(2k-1)$-sphere
\[
S^{2k-1}_{m_1, \dots, m_k} = S^1_{m_1} * \dots * S^1_{m_k}
\]
endowed with the diagonal action of $S^1$.
Namely, $S^1$ acts on the $i$-th copy $S^1_{m_i}$ by
\[
S^1 \times S^1_{m_i} \longrightarrow S^1_{m_i}, \quad (\lambda, z) \longmapsto \lambda^{m_i} \cdot z.
\]

\begin{theorem}
\label{thm:equivariant-cohomology}
The equivariant cohomology of $S^{2k-1}_{m_1, \dots, m_k}$ is
\[
H_{S^1}^*(S^{2k-1}_{m_1,\dots,m_k};\Z) \cong 
\begin{cases}
\Z[u]/(m_1\dots m_k\, u^k) & \text{if } m_1 \cdot\ldots\cdot m_k \neq 0\\
\Z[u,a]/(a^2) & \text{if } m_1 \cdot\ldots\cdot m_k = 0,
\end{cases} 
\]
where $|u| =2$ and $|a| = 2k-1$.
\end{theorem}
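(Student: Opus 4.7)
The plan is to realize $S^{2k-1}_{m_1,\dots,m_k}$ as an equivariant unit sphere of a concrete $S^1$-representation, and then run the Gysin long exact sequence of the associated Borel sphere bundle. Writing $\C_m$ for $\C$ with the $S^1$-action $\lambda \cdot z = \lambda^m z$, the iterated join $S^1_{m_1}*\cdots*S^1_{m_k}$ is $S^1$-equivariantly homeomorphic to the unit sphere $S(V)$ of the representation
\[
V \;=\; \C_{m_1}\oplus\cdots\oplus\C_{m_k},
\]
via the standard identification $(z_1,\dots,z_k;t_1,\dots,t_k)\mapsto(\sqrt{t_1}\,z_1,\dots,\sqrt{t_k}\,z_k)$ with $t_i\ge 0$ and $\sum t_i = 1$. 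Passing to the Borel construction, $ES^1\times_{S^1}S(V)$ becomes the unit sphere bundle of the associated complex rank-$k$ vector bundle $\xi = ES^1\times_{S^1}V$ over $BS^1$.

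The next step is to compute the Euler class of $\xi$. The bundle splits as a Whitney sum $\xi \cong L_{m_1}\oplus\cdots\oplus L_{m_k}$, where $L_{m_i} = ES^1\times_{S^1}\C_{m_i}$ is a complex line bundle. Fixing the standard generator $u\in H^2(BS^1;\Z)$ so that $c_1(L_1) = u$, the multiplicativity of the first Chern class under the power map $\lambda\mapsto\lambda^{m_i}$ gives $c_1(L_{m_i}) = m_i\,u$. The Whitney sum formula then yields
\[
e(\xi) \;=\; c_k(\xi) \;=\; \prod_{i=1}^{k} c_1(L_{m_i}) \;=\; m_1 m_2\cdots m_k\,u^k \;\in\; H^{2k}(BS^1;\Z).
\]

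Setting $M = m_1\cdots m_k$, the Gysin long exact sequence of the oriented $S^{2k-1}$-bundle $S(\xi)\to BS^1$ reads
\[
\cdots\to H^{p-2k}(BS^1)\xrightarrow{\,\smile Mu^k\,}H^p(BS^1)\xrightarrow{\pi^*}H^p_{S^1}(S^{2k-1}_{m_1,\dots,m_k})\to H^{p-2k+1}(BS^1)\xrightarrow{\,\smile Mu^k\,}H^{p+1}(BS^1)\to\cdots.
\]
Since $H^*(BS^1)\cong\Z[u]$ is torsion-free and concentrated in even degrees, the cup-product map is injective exactly when $M\ne 0$. When $M\ne 0$, the sequence immediately produces a $\Z[u]$-module isomorphism $H^*_{S^1}(S^{2k-1}_{m_1,\dots,m_k})\cong\Z[u]/(Mu^k)$; because $u$ is pulled back from the base along $\pi^*$, this isomorphism respects cup products. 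When $M = 0$, the connecting maps vanish, and the resulting short exact sequences split additively (since $H^*(BS^1)$ is free), producing a degree-$(2k-1)$ class $a$ with $\Z[u]$-module decomposition $\Z[u]\oplus\Z[u]\,a$. Graded commutativity forces $2a^2 = 0$, while $H^{4k-2}$ of the total space is $\Z\,u^{2k-1}$ and is torsion-free, so $a^2 = 0$ and the ring is $\Z[u,a]/(a^2)$.

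The main obstacle is the input to Gysin, namely pinning down $c_1(L_{m_i}) = m_i u$ and hence $e(\xi) = M u^k$ on the nose with the correct sign convention; this amounts to verifying naturality of the first Chern class along the $m_i$-th power map $S^1\to S^1$. Once this Chern-class bookkeeping is in place, the rest is a routine unraveling of the Gysin sequence, made especially painless by the fact that $H^*(BS^1)$ is a polynomial ring on a single even-degree generator.
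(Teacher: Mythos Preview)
Your proof is correct and follows essentially the same strategy as the paper: realize $S^{2k-1}_{m_1,\dots,m_k}$ as the unit sphere of the representation $V=\C_{m_1}\oplus\cdots\oplus\C_{m_k}$, compute the Euler class of the associated bundle over $BS^1$ as $m_1\cdots m_k\,u^k$, and read off the answer from the Gysin sequence. The only notable difference is one of packaging: the paper sets up the Serre spectral sequence of the sphere bundle and then invokes Gysin to identify the single nontrivial differential, whereas you bypass the spectral sequence and work directly with the Gysin long exact sequence throughout; your route is slightly more economical. The paper also derives $c_1(L_m)=mu$ via the tensor-power identification $\xi_m\cong\xi_1^{\otimes m}$ rather than via naturality under the $m$-th power map, but these are equivalent standard arguments.
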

\begin{proof}

Let us denote by $\complex{m}$ the space $\C$ endowed with an $S^1$-action 
\[
S^1 \times \C \longrightarrow \C, \quad (\lambda, z) \longmapsto \lambda^m z.
\]
In particular, $S^1_m \subseteq \complex{m}$ is an invariant subspace.
We endow $\complex{m}$ with an orientation (as a 2-dimensional real vector space) by proclaiming the basis $1, i$ to be positively oriented.
This is the so-called \emph{canonical orientation} of the complex vector space~\cite
[Lemma~14.1]{Milnor1974}.
Accordingly, let
\begin{equation*}
\xi_{m}:~~ \complex{m} \longrightarrow ES^1 \times_{S^1} \complex{m} \longrightarrow BS^1
\end{equation*}
denote a complex line bundle.
The canonical orientation of $\complex{m}$ induces an orientation on the bundle $\xi_{m}$, where a positively oriented basis of any fiber $F \approx \C$ is of the form $v, iv$, for some non-zero vector $v \in F$.

Notice that $\xi_{1}$ is isomorphic to the canonical line bundle over $BS^1 = \C P^{\infty}$, so its first Chern class $c_1(\xi_{1})\in H^2(BS^1;\Z)$ is a generator of integral cohomology of $BS^1$~\cite
[Theorem~3.2(d)]{hatcher2003vector}.
Thus we can write 
\[
H^*(BS^1;\Z) \cong \Z[u]
\]
for $u \coloneqq c_1(\xi_{1}) \in H^2(BS^1;\Z)$.

For an integer $m \ge 1$, the isomorphism between $\complex{m}$ and $\complex{1}^{\otimes m} \cong \complex{1} \otimes_{\C} \dots \otimes_{\C} \complex{1}$ given by
\[
\complex{m} \longrightarrow \complex{1}^{\otimes m}, \quad z \longmapsto z \otimes 1 \otimes \dots \otimes 1
\] 
is $S^1$-equivariant and orientation preserving, with the diagonal action on the tensor product.\footnote{
Indeed, let $(\lambda,z)\in S^1\times \complex{m}$.
Mapping to $\complex{m}$ then $\complex{1}^{\otimes m}$ yields $\lambda^m z \otimes 1 \otimes \dots \otimes 1$, whereas mapping to $S^1\times \complex{1}^{\otimes m}$ then  $\complex{1}^{\otimes m}$ yields $\lambda z \otimes \lambda \otimes \dots \otimes \lambda$, which are equal in $\complex{1}^{\otimes m}$.
}
On the other hand, for $m \le -1$ the conjugation-induced isomorphism
\[
\complex{m} \longrightarrow \complex{1}^{\otimes |m|}, \quad z \longmapsto \overline{z}\otimes 1 \otimes \dots \otimes 1
\]
is $S^1$-equivariant, but orientation reversing.
Therefore, for any integer $m$, there is an orientation-preserving isomorphism of complex line bundles 
$\xi_{m} \cong \xi_{1}^{\otimes m}$.
For $m<0$ we interpret $\xi_{1}^{\otimes m}$ to be the conjugated bundle $\overline{\xi_1}^{\otimes |m|}$, and for $m=0$ the right hand side is the trivial complex line bundle $\xi_{0}$.

Now~\cite
[Proposition~3.10]{hatcher2003vector} implies  
\[
c_1(\xi_{m}) = c_1(\xi_{1}^{\otimes m}) = mc_1(\xi_{1}) = mu \in H^2(BS^1;\Z).
\]
Moreover, due to~\cite
[Proposition~3.13(c)]{hatcher2003vector}, the first Chern class of a complex line bundle is equal to the Euler class of the underlying 2-dimensional vector bundle with canonical orientation:
\[
e(\xi_{m}) = c_1(\xi_{m}) = mu \in H^{2}(BS^1;\Z).
\]

Let us denote by $\complex{m_1, \dots, m_k}^k$ the $k$-dimensional complex vector space $\complex{m_1} \oplus \dots \oplus \complex{m_k}$ which is endowed with the diagonal $S^1$-action.
The canonical orientation of $\complex{m_1, \dots, m_k}^k$ is the one with preferred basis $e_1,ie_1, \dots, e_k, ie_k$, where $e_j$ is the $j$-th standard complex basis vector for each $1 \le j \le k$.
Then we can denote by
\begin{equation*}
\xi_{m_1, \dots, m_k}:~~ \complex{m_1, \dots, m_k}^k \longrightarrow ES^1 \times_{S^1} \complex{m_1, \dots, m_k}^k \longrightarrow BS^1
\end{equation*}
the $k$-dimensional complex vector bundle (canonically orientated) and by 
\begin{equation}
\label{eq:sphere bundle}
S(\xi_{m_1, \dots, m_k}):~~ S(\complex{m_1, \dots, m_k}^k) \longrightarrow ES^1 \times_{S^1} S(\complex{m_1, \dots, m_k}^k) \xrightarrow{~~\pi~} BS^1
\end{equation}
the corresponding sphere bundle.
Here $S(\complex{m_1, \dots, m_k}^k)$ denotes the unit sphere inside $\complex{m_1, \dots, m_k}^k$.
Thus, we are inclined to prove
\[
H^*_{S^1}(S(\complex{m_1, \dots, m_k}^k);\Z) = H^*(ES^1 \times_{S^1} S(\complex{m_1, \dots, m_k}^k);\Z) \cong \begin{cases}
\Z[u]/(m_1\dots m_k u^k) & m_1 \cdot\ldots\cdot m_k \neq 0\\
\Z[u,a]/(a^2) & m_1 \cdot\ldots\cdot m_k = 0
\end{cases}
\]
for $|u| = 2$ and $|a| = 2k-1$.

The isomorphism of complex bundles $\xi_{m_1, \dots, m_k} \cong \xi_{m_1} \oplus \dots \oplus \xi_{m_k}$ preserves the orientation of the underlying real bundles, and due to~\cite
[Proposition~3.13(b)]{hatcher2003vector} we have 
\[
e(\xi_{m_1, \dots , m_k}) = e\left(\bigoplus_{i=1}^k \xi_{m_i}\right) = e(\xi_{m_1}) \cup \dots \cup e(\xi_{m_k}) = m_1 \dots m_k\, u^k \in H^{2k}(BS^1;\Z).
\]
The Leray--Serre spectral sequence of the sphere bundle $S(\xi_{m_1, \dots , m_k})$ has $E_2$-term equal to
\[
E_2^{p,q} \cong H^p(BS^1; \mathcal{H}^q(S(\complex{m_1, \dots, m_k}); \Z)),
\]
where $\mathcal{H}^q(S(\complex{m_1, \dots, m_k}); \Z)$ is the local system of coefficients.
Since $\pi_1(BS^1) \cong \pi_1(\C P^{\infty})$ is trivial, the local system is trivial.
Moreover, since $H^*(BS^1;\Z)$ and $H^*(S(\complex{m_1, \dots, m_k}); \Z)$ are free and finite-dimensional in each degree, due to~\cite
[Proposition~5.6]{mccleary} we have an isomorphism of algebras
\begin{equation}
\label{eq:E2 is tensor product}
E_2^{*,*} \cong H^*(BS^1; \Z) \otimes H^*(S(\complex{m_1, \dots, m_k}); \Z).
\end{equation}
The product on the right hand side of~\eqref{eq:E2 is tensor product} is the bialgebra product generated by
\begin{equation}
\label{eq:bialgebra structure}
(v \otimes w) \cdot (s \otimes t) = (-1)^{|w||s|}~ vs \otimes wt.
\end{equation}

Recall that $H^*(BS^1; \Z) \cong \Z[u]$ with $|u| = 2$ and $H^*(S(\complex{m_1, \dots, m_k}); \Z) \cong \Z[a]/(a^2)$ with $|a| = 2k-1$, so there is no sign in the product~\eqref{eq:bialgebra structure} because the degree $|s|$ is always even.
We have
\begin{equation*}
E_2^{*,q} \cong \begin{cases}
    H^*(BS^1;\Z) & q=0\\
    H^*(BS^1;\Z) \otimes \Z \langle a \rangle & q=2k-1\\
    0 & {\rm otherwise},
\end{cases}
\end{equation*}
where we denote by $\Z \langle a \rangle$ a free $\Z$-module of rank one with generator $a$.
Therefore the only potentially non-zero differential of the spectral sequence is $\partial_{2k}:E_{2k}^{*,*} \to E_{2k}^{*,*}$, and we have $E_2^{*,*} \cong \dots \cong E_{2k}^{*,*}$ and $E_{2k+1}^{*,*} \cong \dots \cong E_{\infty}^{*,*}$.
Additionally, due to~\cite
[Theorem~5.9]{mccleary} we have
\begin{equation}
\label{eq:index in the spectral sequence}
\im (\partial_{2k}\colon E_{2k}^{*,2k-1} \to E_{2k}^{*,0}) = \ker \left(\pi^*\colon H^*(BS^1;\Z) \to H^*(ES^1 \times_{S^1} S(\complex{(m_1, \dots, m_k)}))\right),
\end{equation}
where $\pi^*$ is the map induced from the projection $\pi$ of the sphere bundle~\eqref{eq:sphere bundle}; see Figure~\ref{fig:spectral_sequence}.

\begin{figure}[h]
  \centering
  \includegraphics[width=0.8\textwidth]{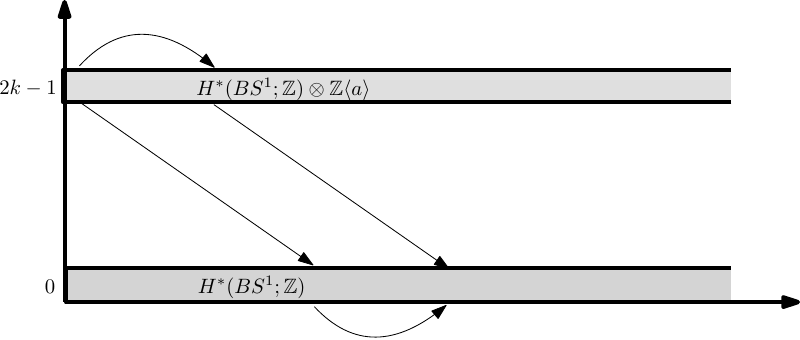}
  \caption{Leray--Serre spectral sequence of the sphere bundle.}
  \label{fig:spectral_sequence}
\end{figure}

From the Gysin sequence~\cite[Theorem~12.2]{milnor1974characteristic} of $\xi_{m_1, \dots, m_k}$ it follows that $\ker(\pi^*) \subseteq H^*(BS^1;\Z)$ is the principal ideal generated by the Euler class $e(\xi_{m_1, \dots, m_k}) = m_1 \dots m_k\, u^k \in H^*(BS^1;\Z)$.
So from~\eqref{eq:index in the spectral sequence} we have
\[
    \im (\partial_{2k}: E_{2k}^{*,2k-1} \to E_{2k}^{*,0}) = \ker (\pi^*) = (m_1 \dots m_k\, u^k) \subseteq H^*(BS^1;\Z).
\]
The map $\partial_{2k}\colon E_{2k}^{*,*} \to E_{2k}^{*,*}$ is a morphism of algebras, so in the view of~\eqref{eq:E2 is tensor product}, its image is the principal ideal generated by $\partial(1 \otimes a) \in E_{2k}^{0,2k}$, where 
\[
    1 \otimes a \in H^{0}(BS^1;\Z) \otimes H^{2k-1}(S(\complex{m_1, \dots, m_k});\Z) \cong E_{2k}^{2k-1, 0}.
\] 
Thus, we conclude $\partial_{2k}(1 \otimes a) = \pm m_1 \dots m_k\, u^k \in H^{2k}(BS^1;\Z)$, i.e., 
\[
    E_{2k+1}^{*,0} \cong \Z[u]/(m_1\dots m_k\, u^k).
\]

If $m_1 \cdot\ldots\cdot m_k \neq 0$ this is the only non-trivial part of $E_{2k+1}^{*,*}$, hence $E_{2k+1}^{*,*} \cong \dots \cong E_{\infty}^{*,*}$ are concentrated in the $0$-th row.
Therefore, in this case we conclude there is an algebra isomorphism
\[
    H^*(ES^1 \times_{S^1} S(\complex{m_1, \dots ,m_k}^k);\Z) \cong \Z[u]/(m_1\cdot\ldots\cdot m_k\, u^k),
\]
with $|u| =2$, as desired.

If $m_1 \cdot\ldots\cdot m_k =0$, then $\partial_{2k}=0$, so we have $E_2^{*,*} \cong \dots \cong E_{2k}^{*,*} \cong \dots \cong E_{\infty}^{*,*}$.
From~\eqref{eq:E2 is tensor product} and the fact that the $\Z$-modules $E_{\infty}^{r,s}$ are all free (hence projective), it follows that there is a $\Z[u]$-module isomorphism
\[
    H^*(ES^1 \times_{S^1} S(\complex{m_1, \dots ,m_k}^k);\Z) \cong \Z[u] \oplus (\Z[u] \otimes \Z\langle a\rangle)
\]
with $|u|=2$ and $|a| = 2k-1$.
Since 
\[
    1 \otimes a \in H^{2k-1}(ES^1 \times_{S^1} S(\complex{m_1, \dots ,m_k}^k);\Z) \cong \Z \langle 1 \otimes a \rangle
\]
is an element of odd degree, by graded-commutativity we have 
\[
    (1 \otimes a)^2 =(-1)^{|1 \otimes a|^2}(1 \otimes a)^2= - (1 \otimes a)^2.
\]
That is,
\[
    (1 \otimes a)^2 \in H^{4k-2}(ES^1 \times_{S^1} S(\complex{m_1, \dots ,m_k}^k);\Z) \cong \Z \langle u^{2k-1} \rangle
\]
has order 2 while living in a free $\Z$-module, so it has to be zero.
We can deduce the entire algebra structure of $H^*_{S^1}(S(\complex{m_1, \dots ,m_k}^k);\Z)$ now from its $\Z[u]$-module structure and commutativity:
\[
    (u^i \otimes a) \cdot (u^j \otimes a) = u^{i+j}(1 \otimes a)^2 = 0 \quad \text{ and } \quad (u^i \otimes a) \cdot u^j = u^j \cdot (u^i \otimes a) = u^{i+j} \otimes a.
\]
Namely, we obtain
\[
    H^*(ES^1 \times_{S^1} S(\complex{m_1, \dots ,m_k}^k);\Z) \cong \Z[u,a]/(a^2),
\]
with $|u| =2$ and $|a|=2k-1$.
This completes the proof of the theorem.
\end{proof}

\begin{remark}[Coefficients]
\label{rem: coefficients}
Regarding cohomology with field coefficients, we have
\[
    H^*_{S^1}(S(\complex{1,3, \dots, 2k-1}^k);\F)\cong \F[u]/(u^{k+1})
\]
where $\F$ is $\Q$, $\R$, or $\F_2$.
    
Indeed, Theorem~\ref{thm:equivariant-cohomology} implies that 
\[
    H^*_{S^1}(S(\complex{1,3, \dots, 2k-1}^k);\Z) = H^*(ES^1 \times_{S^1} S(\complex{1,3, \dots, 2k-1}^k);\Z)
\]
is finitely generated in each degree.
Therefore, by applying the universal coefficient theorem for cohomology~\cite[Ch.~5,~Sec.~5,~Thm.~10]{Spanier89},
we get a short exact sequence of $\Z$-modules 
\begin{equation*}
    0 \longrightarrow H^n_{S^1}(S(\complex{1,3, \dots, 2k-1}^k);\Z) \otimes \F \longrightarrow H^n_{S^1}(S(\complex{1,3, \dots, 2k-1}^k); \F) \longrightarrow \tor_1^{\Z}(H^{n+1}_{S^1}(S(\complex{1,3, \dots, 2k-1}^k);\Z); \F) \longrightarrow 0 
\end{equation*}
for any $n \ge 0$.

Since $\Q$ and $\R$ are flat $\Z$-modules, we have 
\[
    \tor_1^{\Z}(H^{n+1}_{S^1}(S(\complex{1,3, \dots, 2k-1}^k);\Z); \F)=0
\]
for $\F = \Q,\ \R$.
On the other hand,  
\[
    \tor_1^{\Z}(H^{n+1}_{S^1}(S(\complex{1,3, \dots, 2k-1}^k);\Z); \F_2) = \tor_1^{\Z}(H^{n+1}_{S^1}(S(\complex{1,3, \dots, 2k-1}^k);\Z); \Z/2)
\]
is the module of 2-torsion elements of $H^{n+1}_{S^1}(S(\complex{1,3, \dots, 2k-1}^k))$.
However, by Theorem~\ref{thm:equivariant-cohomology},
\[
    H^{n+1}_{S^1}(S(\complex{1,3, \dots, 2k-1}^k)) \cong
    \begin{cases}
    0 & n \textrm{ even},\\
    \Z & n \textrm{ odd and } 0 \le n < 2k-1,\\
    \Z/(1\cdot 3 \dots (2k-1)) & n \textrm{ odd and } n \ge 2k-1
    \end{cases}
\]
has no 2-torsion.
Therefore, 
\[
    \tor_1^{\Z}(H^{n+1}(\vrm{S^1}{r};\Z); \F_2) = 0
\]
as well.

It follows from the short exact sequence that $H^n(\vrm{S^1}{r};\Z) \otimes \F \cong H^n(\vrm{S^1}{r}; \F)$ for $\F=\Q$, $\R$, and $\F_2$.
Finally, the claim is obtained by naturality of the ring structure of cohomology with respect to the canonical coefficient morphism $\Z \longrightarrow \F$. 
\end{remark}

\begin{remark}[Naturality]
\label{rem: naturality}
If $k \ge 1$ and ${\rm incl} \colon \complex{1,3, \dots, 2k-1}^k \longrightarrow \complex{1,3, \dots, 2k+1}^{k+1}$ denotes the $S^1$-inclusion, we get a bundle morphism
\[
    \begin{tikzcd}
    ES^1\times_{S^1} S^{2k-1}_{1,3, \dots, 2k-1} \arrow[r, "{\rm incl}"] \arrow[d, "\pi"] & ES^1\times_{S^1} S^{2k+1}_{1,3, \dots, 2k+1} \arrow[d, "\pi"]\\
    BS^1 \arrow[r, equal] & BS^1
    \end{tikzcd}
\]
By naturality of the spectral sequences, we get that the induced map
\[
    {\rm incl}^* \colon H^*_{S^1}(S^{2k+1}_{1,3, \dots, 2k+1};\Z) \longrightarrow H^*_{S^1}(S^{2k-1}_{1,3, \dots, 2k-1};\Z) 
\]
is the canonical projection $\Z[u]/(1\cdot 3 \dots (2k+1)u^{k+1}) \longrightarrow \Z[u]/(1\cdot 3 \dots (2k-1)u^{k})$.
The analogous statement holds in cohomology with field coefficients $\F = \Q$, $\R$, or $\F_2$.
\end{remark}

\section{Equivariant homotopy equivalence $\vrm{S^1}{r}\simeq_{S^1} S^1_1*S^1_3*S^1_5*S^1_7*\ldots$}
\label{ssec:S1-he}

In this section, we identify a simpler (finite-dimensional) space that is $S^1$-homotopy equivalent to $\vrm{S^1}{r}$, which we use to show $\vrm{S^1}{r} \simeq_{S^1} S^1_1*S^1_3*\ldots*S^1_{2k+1}$ for $r \in [\frac{2\pi k}{2k+1}, \frac{2\pi(k+1)}{2k+3})$.
In~\cite{moyVRmS1}, a quotient map $q \colon \vrm{S^1}{r} \to \vrm{S^1}{r}/_{\sim}$ is constructed and shown to be a homotopy equivalence.
Briefly, the support of each measure $\mu \in \vrm{S^1}{r}$ can be partitioned into an odd number $n$ (depending on $\mu$) of clusters of close points.
An appropriately defined weighted average then sends $\mu$ to a measure supported on $n$ evenly spaced points on the circle, called a \emph{regular polygonal measure}, with all the mass in a cluster being moved to one of these points.
The quotient map $q$ then identifies measures that have the same average, and each class can be represented by this average regular polygonal measure.
The quotient $\vrm{S^1}{r}/_{\sim}$ is then shown to be a CW complex homotopy equivalent to $S^{2k+1}$ for $r \in [\frac{2\pi k}{2k+1}, \frac{2\pi(k+1)}{2k+3})$.
Our goal is to show that this can be upgraded to an $S^1$-homotopy equivalence, where $S^{2k+1}$ is viewed as $S^1_1*S^1_3*\ldots*S^1_{2k+1}$ with the diagonal action.
In Lemma~\ref{lem: homotopy equivalence}, we reprove the ordinary homotopy equivalence using a map that respects the symmetry.
In Theorem~\ref{thm:S^1-equivalence}, we show that this map is in fact an $S^1$-homotopy equivalence.
We introduce notation from~\cite{moyVRmS1} in Section~\ref{ssec:ehe-background}, construct the framework for Theorem~\ref{thm:S^1-equivalence} in Section~\ref{ssec:ehe-def-prop}, and complete the proof of this theorem in Section~\ref{ssec:proof-ehe}.

\subsection{Background and notation}\label{ssec:ehe-background}

We now recall notation from~\cite{moyVRmS1}.
For any delta measure $\delta_{\theta} \in \vrm{S^1}{r}$, let $E(\delta_\theta)$ be an open interval on $S^1$ of length $2\pi-2r$ centered at $e^{i(\theta+\pi)}$.
We can extend this definition to any measure $\mu \in \vrm{S^1}{r}$ by setting $E(\mu) = \bigcup_{e^{i \theta} \in \supp (\mu)} E(\delta_\theta)$.
This is the so-called {\em excluded region} of $\mu$.
Moreover, we set $A(\mu)$ to be the union of the connected components of $S^1 \setminus E(\mu)$ which contain at least one point of $\supp(\mu)$.
Now, $A(\mu)$ is a union of an odd number of (possibly singleton) connected closed intervals of $S^1$~\cite[Proposition~1]{moyVRmS1}, called {\em $\mu$-arcs}. 

For any $k \ge 0$, we let $V_{2k+1} \subseteq \vrm{S^1}{r}$ denote the set of measures $\mu$ for which $A(\mu)$ has exactly $2k+1$ connected components.
This defines a stratification $\vrm{S^1}{r} = \bigcup_{k \ge 0} V_{2k+1}$, where $V_{2k+1}$ is empty whenever $2k+1 > \frac{\pi}{\pi - r}$.
Even though $V_{2k+1}$ depends on the parameter $r$, we will omit $r$ from the notation for simplicity.
For more information and background, see~\cite[Section~3]{moyVRmS1}.\\

We can write any $\mu \in V_{2k+1}$ as $\mu = \sum_{j=0}^{2k} \sum_{i=0}^{n_j} a_{i,j} \delta_{\theta_{i,j}}$, where the $\mu$ arcs are ordered counterclockwise around the circle and each $\theta_{i,j}$ is in the $j^\text{th}$ $\mu$-arc.
We further choose a point $y$ in the excluded region of $\mu$ between the $2k^{\text{th}}$ and $0^{\text{th}}$ $\mu$-arcs and write each $\theta_{i,j}$ in the interval $(y,y+2\pi)$.
Then temporarily letting $m = \sum_{j=0}^{2k} \sum_{i=0}^{n_j} a_{i,j} (\theta_{i,j} - \frac{2 \pi j}{2k+1})$, we define a map $F_{2k+1} \colon V_{2k+1} \to V_{2k+1}$ by setting 
\[
    F_{2k+1}(\mu) = \sum_{j=0}^{2k} \left(\sum_{i=0}^{n_j} a_{i,j}\right) \delta_{m + \frac{2 \pi j}{2k+1}}.
\]
Thus, $F_{2k+1}(\mu)$ is a regular polygonal measure that should be understood as an appropriate weighted average of the delta masses of $\mu$.
We then define an equivalence relation $\sim$ on $\vrm{S^1}{r}$ by letting $\mu \sim \nu$ if and only if $\mu, \nu \in V_{2k+1}$ for some $k$ and $F_{2k+1}(\mu) = F_{2k+1}(\nu)$, and we then define
\begin{equation}
\label{eq: q}
    q \colon \vrm{S^1}{r} \longrightarrow \vrm{S^1}{r}/_\sim
    \quad\text{by}\quad
    \mu \longmapsto [F_{2k+1}(\mu)].
\end{equation}
The map $q$ is shown in~\cite[Theorem 1]{moyVRmS1} to be a homotopy equivalence.
Furthermore, the techniques used in the proof can all be carried out using $S^1$-equivariant maps.
Specifically, the action on $\vrm{S^1}{r}/_{\sim}$ is given by $\lambda_\theta \cdot q(\mu) = q(\lambda_\theta \cdot \mu)$, making $q$ equivariant; Propositions~2,3, and~4 of~\cite{moyVRmS1} all have equivariant versions; and the homotopies defined therein are all equivariant.
Thus, the map $q$ is in fact an $S^1$-homotopy equivalence, so we can reduce our goal of showing $\vrm{S^1}{r} \simeq_{S^1} S^1_1*S^1_3*\ldots*S^1_{2k+1}$ to showing $\vrm{S^1}{r}/_{\sim} \simeq_{S^1} S^1_1*S^1_3*\ldots*S^1_{2k+1}$.

For each integer $k \ge 0$, we define the following spaces of regular polygonal measures:
\begin{align*}
\overline{P}_{2k+1} &= \left\{\sum_{j=0}^{2k} a_j \delta_{\theta + \frac{2\pi j}{2k+1}}:~(a_j)_{j=0}^{2k} \in \Delta_{2k}, ~\theta \in \mathbb{R} \right\} \\
P_{2k+1} &= \left\{\sum_{j=0}^{2k} a_j \delta_{\theta + \frac{2\pi j}{2k+1}} \in \overline{P}_{2k+1}:~(a_j)_{j=0}^{2k} \in \relint(\Delta_{2k}) \right\}.
\end{align*}
Here $\Delta_{2k}=\{(a_j)_{j=0}^{2k} :~ a_j \ge 0\text{ for all $j$ and } \sum_{j=0}^{2k}a_j=1\}$ is the $2k$-dimensional simplex, and $\relint(\Delta_{2k}) = \{(a_j)_{j=0}^{2k} \in \Delta_{2k}:~ a_j > 0 \text{ for all $j$}\}$ denotes the {\em relative interior} of the simplex $\Delta_{2k}$.\\

The quotient map $q$ identifies each measure of $\vrm{S^1}{r}$ with a measure in some $P_{2k+1}$, so we have $\vrm{S^1}{r}/_{\sim}\, = q(P_1) \cup q(P_3) \cup \dots \cup q(P_{2K+1})$, where $K = \lfloor \frac{r}{2 \pi - 2r} \rfloor$ is the largest integer $k$ such that $\vrm{S^1}{r}$ contains a measure of $P_{2k+1}$.
We set 
\[
X_{2k+1} \coloneqq q(P_1) \cup q(P_3) \cup \dots \cup q(P_{2k+1}) \subseteq \vrm{S^1}{r}/_{\sim}
\]
for each $k \ge 0$.
Note that $X_{2k+1}$ is the $(2k+1)$-skeleton of the CW complex of~\cite{moyVRmS1}, known to be homotopy equivalent to $S^{2k+1}$.\\

The centers of the regular polygons of $P_{2k+1}$ form a circle.
We will use subscripts to keep track of these circles, letting
\[
S^1_{2k+1} = \left\{ \sum_{j=0}^{2k} \tfrac{1}{2k+1} \delta_{\theta + \frac{2 \pi j}{2k+1}} \colon~ \theta \in \R \right\}.
\]
Each $S^1_{2k+1}$ inherits an action of the circle group from the action on $\vrm{S^1}{r}$, so the action on $S^1_{2k+1}$ has kernel generated by $\lambda_{\frac{2 \pi}{2k+1}}$.
Note that this agrees with the definition of $S^1_{2k+1}$ used in Section~\ref{sec:epc-S1join} after identifying each $S^1_{2k+1}$ with the usual circle $S^1$.

As in Section~\ref{sec:epc-S1join}, we also define a homeomorphic copy of the $(2k+1)$-sphere by 
\[
S^{2k+1}_{2k+1} = S^1_{1} \ast S^1_{3} \ast \dots \ast S^1_{2k+1}
\]
with the diagonal action of the circle group.
We write elements of $S^{2k+1}_{2k+1}$ as formal linear combinations.

It will be useful for later to describe a sum-decomposition of elements of $\overline{P}_{2k+1}$.
Namely, given $\mu=\sum_{j=0}^{2k} a_j \delta_{\theta + \frac{2\pi j}{2k+1}} \in \overline{P}_{2k+1}$, let 
\[
    t:= (2k+1)\cdot\min_{0 \le j \le 2k}\{a_j\} \in [0,1].
\]
Then $\mu$ can be written as the convex combination
\begin{equation}
\label{eq: decomposition of P_bar}
\mu=(1-t)\nu + t\sum_{j=0}^{2k}\tfrac{1}{2k+1}\delta_{\theta + \frac{2\pi j}{2k+1}}
\end{equation}
for $\nu \in \partial P_{2k+1} := \overline{P}_{2k+1} \setminus P_{2k+1}$, and we will soon define our homotopy equivalence to send $\mu$ to a linear combination in $S^{2k+1}_{2k+1}$ with its last component equal to $t\sum_{j=0}^{2k}\frac{1}{2k+1}\delta_{\theta + \frac{2\pi j}{2k+1}}$.

We will also need the homeomorphism types of $P_{2k+1}$ and $\overline{P}_{2k+1}$.
Let $\sim$ be the equivalence relation on $\Delta^{2k} \times I$, where $I\coloneqq [0, \tfrac{2\pi}{2k+1}]$, generated by 
\[
\big((a_0, a_1, \dots, a_{2k}),~ 0\big) \sim \big((a_1, \dots, a_{2k}, a_0),~  \tfrac{2\pi}{2k+1}\big),
\]
for $(a_0, \dots, a_{2k}) \in \Delta^{2k}$.
Then the map
\begin{equation*}
(\Delta^{2k} \times I)/_{\sim} \longrightarrow \overline{P}_{2k+1}, \hspace{5mm} [(a_0, \dots, a_{2k}), \theta] \longmapsto \sum_{j=0}^{2k} a_j \delta_{\theta+ \frac{2\pi j}{2k+1}}
\end{equation*}
is a homeomorphism and restricts to a homeomorphism on the boundary:
\begin{equation}
\label{eq: mapping torus}
\begin{tikzcd}
(\Delta^{2k} \times I)/_{\sim} \arrow[r, "\approx"] \arrow[d, hookleftarrow] & \overline{P}_{2k+1} \arrow[d, hookleftarrow]\\
(\partial\Delta^{2k} \times I)/_{\sim}  \arrow[r, "\approx"] & \partial P_{2k+1}.
\end{tikzcd}
\end{equation}
The space $(\Delta^{2k}\times I)/_{\sim}$ is the mapping torus of a simplex automorphism
\[
    \Delta^{2k} \longrightarrow \Delta^{2k}, ~ (a_0, a_1, \dots, a_{2k}) \longmapsto (a_1, \dots, a_{2k}, a_0),
\]
which preserves orientation, since a cycle of an odd number of elements is an even permutation.
Hence, $(\Delta^{2k}\times I)/_{\sim}$ is homeomorphic to $D^{2k} \times S^1$; see Figure~\ref{fig:S1_action_on_P3}.
The above simplex automorphism restricts to an automorphism of $\partial \Delta_{2k}$ of degree $(-1)^{2k}=1$, so its mapping torus $(\partial\Delta^{2k}\times I)/_{\sim}$ is homeomorphic to the product $S^{2k-1} \times S^1$: 
\begin{equation*}
\label{eq:mapping torus is trivial}
\begin{tikzcd}
		D^{2k} \times S^1 \arrow[r, "\approx"] \arrow[d, hookleftarrow] & (\Delta^{2k} \times I)/_{\sim} \arrow[d, hookleftarrow]\\
		S^{2k-1} \times S^1  \arrow[r, "\approx"] & (\partial\Delta^{2k} \times I)/_{\sim}.
\end{tikzcd}
\end{equation*}
Composing the previous two square diagrams we get
\begin{equation}
\label{eq: bdr P into over P is cofibration}
	\begin{tikzcd}
		D^{2k} \times S^1 \arrow[r, "\approx"] \arrow[d, hookleftarrow] & \overline{P}_{2k+1} \arrow[d, hookleftarrow]\\
		S^{2k-1} \times S^1  \arrow[r, "\approx"] & \partial P_{2k+1}.
\end{tikzcd}
\end{equation}

\begin{figure}[h]
    \centering
    \includegraphics[width=0.4\textwidth]{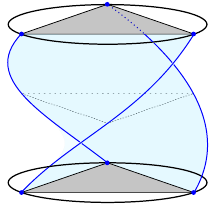}
    \caption{The mapping torus identification of $\overline{P}_{2k+1}$~\eqref{eq: mapping torus} when $2k+1=3$.
    The top and bottom faces are identified.
    From bottom to top, the angle $\theta$ varies over $[0,\frac{2\pi}{3}]$.
    The three blue curves trace the movement of a single vertex under the action of $S^1$.}
    \label{fig:S1_action_on_P3}
\end{figure}

We also define
\[
R_{2k} \coloneqq \left\{\sum_{j=0}^{2k}a_j \delta_{\frac{2\pi j}{2k+1}}:~(a_j)_{j=0}^{2k} \in \Delta_{2k} \right\} \cong \Delta_{2k}
\]
and 
\[
\partial R_{2k} \coloneqq \left\{\sum_{j=0}^{2k}a_j \delta_{\frac{2\pi j}{2k+1}}:~(a_j)_{j=0}^{2k} \in \partial \Delta_{2k} \right\} \cong \partial\Delta_{2k}.
\]
The approach of~\cite{moyVRmS1} uses $q(R_{2k})$ as a $2k$-cell of a $CW$ complex and shows the $2k$-skeleton is contractible; the remaining points of $q(P_{2k+1})$ then form the single $(2k+1)$-cell of the skeleton $X_{2k+1}$, and contracting the $2k$-skeleton to a point shows $X_{2k+1} \simeq S^{2k+1}$.
However, this approach ignores the symmetry of the circle, treating the measures of $R_{2k}$ differently than rotated copies.
In the following section, we describe directly how the spaces $q(P_{2k+1})$ glue together, giving a homotopy equivalence that respects the symmetry of the circle.

\subsection{Definition of $\varphi_{2k+1}$ and first properties}
\label{ssec:ehe-def-prop}

Recall $X_{2k+1} = q(P_1) \cup q(P_3) \cup \dots \cup q(P_{2k+1})$ and $S^{2k+1}_{2k+1} = S^1_{1} \ast S^1_{3} \ast \dots \ast S^1_{2k+1}$.
We recursively define maps 
\[
\varphi_{2k+1}: X_{2k+1} \longrightarrow S^{2k+1}_{2k+1}
\]
for $k \ge 0$, such that each is $S^1$-equivariant and extends the previously defined $\varphi_{2k-1}$ if $k \ge 1$; our eventual goal is to show each $\varphi_{2k+1}$ is an equivariant homotopy equivalence.
For $k=0$, we let $\varphi_1 \colon X_1 \to S^1_1$ send a class to the unique delta measure representing it: $\varphi_1([\delta_{\theta}]) = \delta_{\theta}$.

Let $k \ge 1$ and suppose an $S^1$-equivariant map $\varphi_{2k-1} \colon X_{2k-1} \to S^{2k-1}_{2k-1}$ has already been defined.
First define maps
\begin{align}
\label{eq:map-alpha}
\alpha_{2k-1} ~:~ \partial P_{2k+1} ~~&\longrightarrow ~~ S^{2k-1}_{2k-1} \times S^1_{2k+1}\\ 
\sum_{j=0}^{2k}a_j \delta_{\theta + \frac{2\pi j}{2k+1}} &\longmapsto \Bigg((\varphi_{2k-1} \circ q)\Big(\sum_{j=0}^{2k}a_j \delta_{\theta + \frac{2\pi j}{2k+1}}\Big),~ \sum_{j=0}^{2k} \tfrac{1}{2k+1} \delta_{\theta + \frac{2\pi j}{2k+1}}\Bigg) \notag
\end{align}
and
\begin{align}
\label{eq:map-beta}
\beta_{2k+1}: \overline{P}_{2k+1} &\longrightarrow C(S^{2k-1}_{2k-1}) \times S^1_{2k+1}\\ 
(1-t)\nu + t\sum_{j=0}^{2k} \tfrac{1}{2k+1} \delta_{\theta + \frac{2\pi j}{2k+1}} &\longmapsto \Bigg((1-t)(\varphi_{2k-1} \circ q)(\nu)+tu,~ \sum_{j=0}^{2k} \tfrac{1}{2k+1} \delta_{\theta + \frac{2\pi j}{2k+1}}\Bigg), \notag
\end{align}
where $t \in [0,1]$ is unique such that $\nu \in \partial P_{2k+1}$ (see~\eqref{eq: decomposition of P_bar}) and the cone $C(S^{2k-1}_{2k-1})$ is identified with $S^{2k-1}_{2k-1}*u$.

In the following commutative diagram~\eqref{eq:cube}, the front and back squares are pushouts: for the back square, this follows from the CW structure for $X_{2k+1}$ in~\cite{moyVRmS1}, while the front is the product-into-join pushout.
We define $\varphi_{2k+1}$ to be the induced map on pushouts, represented in the diagram by the dashed arrow.
We will show below that all the diagonal maps are homotopy equivalences.
\begin{equation}
\label{eq:cube}
\begin{tikzcd}
\partial P_{2k+1} \arrow[dd] \arrow[rr, hook, "i"] \arrow[rd, "\alpha_{2k-1}"] & & \overline{P}_{2k+1} \arrow[dd] \arrow[rd, "\beta_{2k+1}"] &                                                 \\
& S^{2k-1}_{2k-1} \times S^1_{2k+1} \arrow[rr, hook, crossing over, "j \hspace{.5cm}"] & & C(S^{2k-1}_{2k-1}) \times S^1_{2k+1} \arrow[dd] \\
X_{2k-1} \arrow[rr, hook] \arrow[rd, "\varphi_{2k-1}"] & & X_{2k+1} \arrow[rd, "\varphi_{2k+1}", dashed] & \\
& S^{2k-1}_{2k-1} \arrow[from=uu, crossing over] \arrow[rr, hook] & & S^{2k+1}_{2k+1} 
\end{tikzcd}
\end{equation}

An explicit formula for $\varphi_{2k+1}$ is
\begin{align}
\label{eq: map phi}
\varphi_{2k+1} \colon X_{2k+1} &\longrightarrow S^{2k-1}_{2k-1} * S^1_{2k+1}\\ 
\big[ (1-t_{2k+1}) \nu_{2k+1} + t_{2k+1} \mu_{2k+1} \big] &\longmapsto (1-t_{2k+1})(\varphi_{2k-1} \circ q)(\nu_{2k+1})+t_{2k+1} \mu_{2k+1}, \notag
\end{align}
where we have chosen a regular polygonal representative 
\[
(1-t_{2k+1}) \nu_{2k+1} + t_{2k+1} \mu_{2k+1} \in \overline{P}_{2k+1},
\]
with $\mu_{2k+1} = \sum_{j=0}^{2k} \frac{1}{2k+1} \delta_{\theta + \frac{2\pi j}{2k+1}} \in S^1_{2k+1}$ for some $\theta$, and again $t_{2k+1} \in [0,1]$ is unique such that $\nu_{2k+1} \in \partial P_{2k+1}$.
The convex combination on the right hand side represents the coordinates in the join $S^{2k-1}_{2k-1} * S^1_{2k+1} = S^{2k+1}_{2k+1}$.
Applying this formula recursively sends a class $[\mu] \in X_{2k+1}$ to a linear combination $t_1 \mu_1 + t_3 \mu_3 + \dots + t_{2k+1} \mu_{2k+1} \in S^{2k+1}_{2k+1}$.
This is analogous to the Fourier series for an odd function, since each $t_{2l+1}$ can be understood as describing the amount of the $(2l+1)$-gonal measure $\mu_{2l+1}$ required to build $\mu$.\\

Our main result of this section is the following.

\begin{theorem}
\label{thm:S^1-equivalence}
Let $k \ge 0$ be an integer and $\frac{2\pi k}{2k+1}\le r<\frac{2\pi(k+1)}{2k+3}$.
Then the composition 
\[
    \vrm{S^1}{r} \xrightarrow{~~~~q~~~~~~~} X_{2k+1} \xrightarrow{~\varphi_{2k+1}~} S^{2k+1}_{2k+1}
\]
is an $S^1$-homotopy equivalence. 
Moreover, if $\frac{2\pi (k+1)}{2k+3}\le r'<\frac{2\pi(k+2)}{2k+5}$, then the diagram
\begin{equation} \label{eq: functoriality of S^1-homotopy equivalence}
    \begin{tikzcd}
    \vrm{S^1}{r} \arrow[r, "q"] \arrow[d, hook] & X_{2k+1} \arrow[r, "\varphi_{2k+1}"] \arrow[d, hook] & S^{2k+1}_{2k+1} \arrow[d, hook] \\
    \vrm{S^1}{r'} \arrow[r, "q"] & X_{2k+3} \arrow[r, "\varphi_{2k+3}"] & S^{2k+3}_{2k+3}
    \end{tikzcd}
\end{equation}
commutes.
\end{theorem}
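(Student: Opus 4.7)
The plan is to reduce the theorem to proving $\varphi_{2k+1}$ is an $S^1$-homotopy equivalence (since $q$ is already such an equivalence via an $S^1$-equivariant enhancement of~\cite[Theorem~1]{moyVRmS1}, as described in Section~\ref{ssec:ehe-background}), and then to induct on $k$. The base case $k=0$ is immediate: $\varphi_1 \colon X_1 \to S^1_1$ is a tautological $S^1$-equivariant homeomorphism.

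For the inductive step, I would apply the $S^1$-equivariant gluing lemma to the cube~\eqref{eq:cube}. The back face is a pushout by the (equivariantly-enhanced) CW description from~\cite{moyVRmS1}, and the front face is the standard pushout presentation of a join, $S^{2k-1}_{2k-1} \ast S^1_{2k+1} \cong C(S^{2k-1}_{2k-1}) \times S^1_{2k+1} \cup_{S^{2k-1}_{2k-1} \times S^1_{2k+1}} S^{2k-1}_{2k-1}$. Both horizontal inclusions are $S^1$-cofibrations (on the back using~\eqref{eq: bdr P into over P is cofibration}, on the front from the standard CW structure on $S^{2k+1}_{2k+1}$). The map $\varphi_{2k-1}$ is an $S^1$-equivalence by induction, so $\varphi_{2k+1}$ will be one provided that the other two ``diagonal'' maps $\alpha_{2k-1}$ and $\beta_{2k+1}$ are as well.

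To verify $\alpha_{2k-1}$ is an $S^1$-equivalence, I would use~\eqref{eq: bdr P into over P is cofibration} to write $\partial P_{2k+1} \approx S^{2k-1} \times S^1$, with the $S^1$-factor recording the ``center'' in $S^1_{2k+1}$. Under this description, $\alpha_{2k-1}$ is equivariantly identified with the product of $\varphi_{2k-1} \circ q$ on the shape factor and the identity on the center factor; here one needs the (implicit in the CW construction of~\cite{moyVRmS1}) fact that $q(\partial P_{2k+1}) \subseteq X_{2k-1}$, since a regular $(2k+1)$-gonal support with a missing vertex produces at most $2k-1$ arcs after the averaging in $q$. The inductive hypothesis then gives that $\varphi_{2k-1} \circ q$ is an $S^1$-equivalence, so $\alpha_{2k-1}$ is too. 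The map $\beta_{2k+1}$ is the fiberwise-over-$S^1_{2k+1}$ cone of $\alpha_{2k-1}$, using the decomposition~\eqref{eq: decomposition of P_bar}; since coning preserves $S^1$-equivalences, $\beta_{2k+1}$ is also an $S^1$-equivalence.

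Functoriality of~\eqref{eq: functoriality of S^1-homotopy equivalence} is then cheap: the left square commutes because the inclusion $\vrm{S^1}{r} \hookrightarrow \vrm{S^1}{r'}$ respects the relation $\sim$ and each $F_{2l+1}$ is defined independently of $r$, while the right square commutes directly from the recursive formula~\eqref{eq: map phi} (an element of $X_{2k+1} \subseteq X_{2k+3}$ has $t_{2k+3}=0$, so $\varphi_{2k+3}$ restricted to $X_{2k+1}$ is the composition of $\varphi_{2k+1}$ with the join-inclusion). The main obstacle I anticipate is the careful $S^1$-equivariant bookkeeping inside $\alpha_{2k-1}$, in particular matching the rotation action on the center of a regular $(2k+1)$-gonal measure to the $\lambda \mapsto \lambda^{2k+1}$ action on $S^1_{2k+1}$ used in Section~\ref{sec:epc-S1join}. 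Should the direct pushout analysis become unwieldy, a fallback is to invoke the equivariant Whitehead theorem and verify that $\varphi_{2k+1}$ restricts to an ordinary homotopy equivalence on $H$-fixed-point sets for each closed subgroup $H \le S^1$, namely $H = S^1$ and $H = \Z/n$.
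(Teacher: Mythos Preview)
Your main line of argument has a genuine gap at the step where you claim $\alpha_{2k-1}$ is an $S^1$-equivalence. The trivialization $\partial P_{2k+1}\approx \partial R_{2k}\times S^1_{2k+1}$ coming from~\eqref{eq: bdr P into over P is cofibration} is \emph{not} $S^1$-equivariant: by~\eqref{eq: mapping torus} the $S^1$-action on $\partial P_{2k+1}$ is that of a mapping torus, so as the center traverses a full loop in $S^1_{2k+1}$ the simplex coordinates undergo the cyclic permutation $(a_0,\dots,a_{2k})\mapsto(a_1,\dots,a_{2k},a_0)$. Thus the fiber $\partial R_{2k}$ is not $S^1$-invariant, the restriction $(\varphi_{2k-1}\circ q)|_{\partial R_{2k}}$ is not an $S^1$-map, and knowing it is an ordinary homotopy equivalence does not let you promote $\alpha_{2k-1}$ to an $S^1$-equivalence by a product argument. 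The same obstruction hits $\beta_{2k+1}$. The paper uses precisely this bundle-over-$S^1_{2k+1}$ picture, but only to establish the \emph{non}-equivariant statement (Lemma~\ref{lem: homotopy equivalence}); the equivariance is obtained by a different route.

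That different route is exactly your ``fallback'': the paper invokes the fixed-point criterion (Lemma~\ref{lem: S^1-equivalence}) and reduces to checking that $(\varphi_{2k+1})^{\Z_n}$ is an ordinary equivalence for each odd $n\mid 2k+1$. The substantive content there is the construction of an explicit homeomorphism $\overline g_{m,n}\colon X_m\xrightarrow{\approx}(X_{mn})^{\Z_n}$, obtained by pulling measures back along the $n$-fold self-cover of $S^1$, together with its analogue $h_{m,n}\colon S^m_m\xrightarrow{\approx}(S^{mn}_{mn})^{\Z_n}$, and a check that these intertwine $\varphi_m$ with $(\varphi_{mn})^{\Z_n}$. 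This reduces the fixed-point statement to Lemma~\ref{lem: homotopy equivalence} at the smaller index $m$. So your instinct was right, but the fixed-point argument is the proof rather than a backup, and carrying it out requires the $\overline g_{m,n}$ construction, which your proposal does not supply. Your treatment of the functoriality square~\eqref{eq: functoriality of S^1-homotopy equivalence} matches the paper's and is fine.
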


It was already observed in Section~\ref{ssec:ehe-background} that $q$ is an $S^1$-homotopy equivalence. Therefore, for the first part of the theorem, it is enough to show that $\varphi_{2k+1}$ is.
This will be the main focus for the rest of the section.
As for the functoriality part of the theorem,
the left square in \eqref{eq: functoriality of S^1-homotopy equivalence} commutes due to the definition \eqref{eq: q} of $q$, while the right square commutes by the inductive formula \eqref{eq: map phi} of $\varphi_{2k+3}$.\\

For a $G$-space $X$ we will denote by $X^G = \{x \in X \colon~ gx=x \textrm{ for every } g\in G\}$ the \emph{fix-point set} of $G$.
To prove that $\varphi_{2k+1}$ is an $S^1$-homotopy equivalence, we will use the following lemma.

\begin{lemma}
\label{lem: S^1-equivalence}
Suppose $X$ and $Y$ are $S^1$-spaces such that for any closed subgroup $H \subseteq S^1$, the fix-point sets $X^H$ and $Y^H$ are finite CW complexes.
Then an $S^1$-equivariant map $f\colon X \to Y$ is an $S^1$-homotopy equivalence if and only if for any closed subgroup $H \subseteq S^1$, the induced map $f^H\colon X^H \to Y^H$ is a homotopy equivalence.
\end{lemma}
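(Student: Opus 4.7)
The plan is to reduce to the equivariant Whitehead theorem. The forward direction (``only if'') is immediate: if $g \colon Y \to X$ is an $S^1$-homotopy inverse of $f$, then the $S^1$-equivariant homotopies $f \circ g \simeq_{S^1} \id_Y$ and $g \circ f \simeq_{S^1} \id_X$ restrict on $H$-fixed points (for any closed $H \leq S^1$) to ordinary homotopies $f^H \circ g^H \simeq \id_{Y^H}$ and $g^H \circ f^H \simeq \id_{X^H}$, exhibiting $f^H$ as a homotopy equivalence.

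For the reverse direction, I will invoke the equivariant Whitehead theorem (see e.g.\ tom Dieck, \emph{Transformation Groups}, Chapter~II, Proposition~2.7, or May, \emph{Equivariant Homotopy and Cohomology Theory}): a $G$-equivariant map $F \colon A \to B$ between $G$-CW complexes is a $G$-homotopy equivalence if and only if $F^H$ is a weak homotopy equivalence for every closed subgroup $H \leq G$. Since weak equivalence and homotopy equivalence coincide for maps of CW complexes, what remains is to equip $X$ and $Y$ with $S^1$-CW structures whose $H$-fixed subcomplexes recover the given finite CW structures on $X^H$ and $Y^H$; with such structures in hand, the hypothesis feeds directly into the theorem.

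The closed subgroups of $S^1$ are $S^1$ itself, the trivial group, and the finite cyclic subgroups $\Z/n$, so only finitely many orbit types appear in our setting. I construct the $S^1$-CW structure by induction on the partially ordered set of orbit types: start with the subcomplex $X^{S^1}$, and inductively attach equivariant cells of the form $(S^1/H) \times D^k$ to match the relative CW structure of $X^H$ over the union of the $X^K$ for $K$ strictly containing $H$. The main obstacle is verifying that this inductive procedure is well-posed and produces a genuine $S^1$-CW complex whose $H$-fixed subspace agrees with the prescribed $X^H$; this is a standard but technical construction that relies crucially on the finiteness of each $X^H$ as a CW complex, ensuring the attaching maps can be chosen equivariantly and the induction terminates. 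Performing this construction for both $X$ and $Y$ and then applying the equivariant Whitehead theorem concludes the proof.
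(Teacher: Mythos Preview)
Your forward direction is fine, and your instinct to invoke an equivariant Whitehead-type theorem is correct. However, the paper takes a different and considerably shorter route: rather than attempting to put $S^1$-CW structures on $X$ and $Y$, it quotes the version of the equivariant Whitehead theorem valid for $G$-ANR's (tom Dieck, \emph{Transformation Groups and Representation Theory}, Proposition~8.2.6), and then verifies that $X$ and $Y$ are $S^1$-ANR's. The latter follows from another result of tom Dieck (Proposition~5.2.6): a $G$-space is a $G$-ANR if and only if each fixed-point set $X^H$ is an ANR. Since finite CW complexes are ENR's, hence ANR's, the hypothesis feeds directly into this criterion. No CW structures need be constructed.

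Your proposed construction of an $S^1$-CW structure has a genuine gap. The hypothesis only says that each $X^H$ admits \emph{some} finite CW structure; there is no assumed compatibility between the structures for different $H$. In particular, for $K \supsetneq H$ the subspace $X^K \subseteq X^H$ need not be a subcomplex of the given CW structure on $X^H$, so the ``relative CW structure of $X^H$ over the union of the $X^K$'' you invoke is not available from the hypotheses. Even granting such compatibility, one must still produce equivariant attaching maps $(S^1/H)\times S^{k-1} \to X$ landing in the previously built skeleton, and your sketch does not address this. Finally, the assertion that only finitely many orbit types occur is not immediate from the hypotheses as stated (it would follow once you know $X$ is a compact $G$-ANR, but that is precisely what the paper's route establishes). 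Calling the construction ``standard but technical'' papers over what is in fact the entire content of the lemma; the $G$-ANR route is the clean way to convert ``each $X^H$ is a nice space'' into ``$X$ has the $S^1$-homotopy type of an $S^1$-CW complex.''
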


\begin{proof}
Due to~\cite[Proposition~8.2.6]{tom2006transandrepres} (see also~\cite[Theorem~1.1]{james1978equivariant}), the equivalence holds if $X$ and $Y$ are $S^1$-ANR's (absolute neighborhood retracts).
Let us show that our assumptions imply that $X$ and $Y$ are $S^1$-ANR's.
Due to symmetry, we will consider only the space $X$.
	
Since the fix-point sets $X^H$ are finite CW complexes, they are also ENR's (Euclidean neighborhood retracts)~\cite[Corollary~A.10]{Hatcher}.
Since the class of ENR's coincides with the class of locally compact, separable, and finite-dimensional ANR's~\cite[Section 5.3]{Srzednicki04}, we get that $X^H$ is an ANR for every closed subgroup $H\subseteq S^1$.
This is equivalent to $X$ being an $S^1$-ANR, due to~\cite[Proposition~5.2.6]{tom2006transandrepres}.
\end{proof}

Therefore, in order to prove Theorem~\ref{thm:S^1-equivalence}, it is enough to show that for every closed subgroup $H \subseteq S^1$,
\begin{enumerate} [(i)] 
\item \label{enum: fix-point-sets are cw} the fix-point sets $(X_{2k+1})^H$ and $(S^{2k+1}_{2k+1})^H$ are finite CW-complexes, and
\item \label{enum: phi^H is equivalence} the induced map $(\varphi_{2k+1})^H: (X_{2k+1} )^H \to (S^{2k+1}_{2k+1})^H$ is a homotopy equivalence.
\end{enumerate}

The proper closed subgroups of $S^1$ are $\Z_n \coloneqq \Z/n$, for integers $n \ge 2$.
Conditions~\eqref{enum: fix-point-sets are cw} and \eqref{enum: phi^H is equivalence} hold for $H=S^1$, since $S^1$ acts fix-point free on all spaces involved. 
On the other hand, the case when $H$ is the trivial group is treated below in Lemma~\ref{lem: homotopy equivalence}, to which we will reduce the more general case $H=\Z_n$ in the next section. 
Moreover, the lemma reproves the homotopy equivalence of~\cite[Theorem~2]{moyVRmS1}, which is the non-equivariant version of Theorem~\ref{thm:S^1-equivalence}.

\begin{lemma}
\label{lem: homotopy equivalence}
For any integer $k \ge 0$, the map $\varphi_{2k+1}: X_{2k+1} \longrightarrow S^{2k+1}_{2k+1}$ is a homotopy equivalence.
\end{lemma}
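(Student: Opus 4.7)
The plan is to argue by induction on $k$. The base case $k=0$ holds because $\varphi_1\colon X_1 \to S^1_1$ is the homeomorphism $[\delta_\theta]\mapsto \delta_\theta$ by definition. For the inductive step I would apply the gluing lemma for homotopy pushouts to the cube~\eqref{eq:cube}. The back face is a strict pushout by the CW structure of $X_{2k+1}$ constructed in~\cite{moyVRmS1}, and the front face is the standard pushout realizing the join $S^{2k-1}_{2k-1}\ast S^1_{2k+1}=S^{2k+1}_{2k+1}$. The horizontal arrows are cofibrations: on the back face this comes from~\eqref{eq: bdr P into over P is cofibration}, which identifies the pair $(\overline P_{2k+1},\partial P_{2k+1})$ with $(D^{2k}\times S^1,\,S^{2k-1}\times S^1)$; on the front face, because $(CX,X)$ is a standard NDR pair. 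It therefore suffices to prove that the three diagonal maps in the cube---namely $\varphi_{2k-1}$, $\alpha_{2k-1}$, and $\beta_{2k+1}$---are all homotopy equivalences.

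The first is immediate from the inductive hypothesis. For $\beta_{2k+1}$, I would construct compatible linear deformation retractions: on the source, contract $\overline P_{2k+1}$ onto the circle of regular polygonal measures $S^1_{2k+1}$ via $H_s(\mu)=(1-s)\mu+s\mu_{2k+1}$, using the decomposition $\mu=(1-t)\nu+t\mu_{2k+1}$ from~\eqref{eq: decomposition of P_bar}; on the target, contract $C(S^{2k-1}_{2k-1})\times S^1_{2k+1}$ onto $\{u\}\times S^1_{2k+1}$ by sliding linearly toward the apex. A direct computation with the explicit formula for $\beta_{2k+1}$ shows that these two retractions are intertwined, and the induced map on the common image $S^1_{2k+1}\to \{u\}\times S^1_{2k+1}$ is a homeomorphism, so $\beta_{2k+1}$ is a homotopy equivalence.

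The main obstacle is the map $\alpha_{2k-1}$. Both $\partial P_{2k+1}$ and $S^{2k-1}_{2k-1}\times S^1_{2k+1}$ are fibrations over $S^1_{2k+1}$---the source by~\eqref{eq: bdr P into over P is cofibration}, which identifies it with the product $S^{2k-1}\times S^1$, and the target trivially---and $\alpha_{2k-1}$ is a bundle map over the identity on the base. By the five lemma applied to the long exact sequences of these fibrations, it suffices to show that the fiber map
\[
\partial \Delta^{2k} \longrightarrow S^{2k-1}_{2k-1}, \quad (a_0,\dots,a_{2k})\longmapsto \varphi_{2k-1}\!\left(q\!\left(\textstyle\sum_{j=0}^{2k} a_j\delta_{2\pi j/(2k+1)}\right)\right),
\]
is a homotopy equivalence of $(2k-1)$-spheres, that is, has degree $\pm 1$. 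I would verify this face by face: on the top face $F_{j_0}=\{a_{j_0}=0\}$ of $\partial \Delta^{2k}$, the associated measure has $2k$ support points and lies in $V_{2k-1}$, so $q$ sends this face onto the top cell of $X_{2k-1}$ indexed by the missing vertex $j_0$; combining the contributions of the $2k+1$ top faces cyclically, and using that the cyclic permutation of~\eqref{eq: mapping torus} is orientation-preserving, forces the total degree to be $\pm 1$. Together with the gluing lemma, this completes the induction.
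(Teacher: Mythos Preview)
Your overall architecture is exactly the paper's: induction with base case $\varphi_1$ a homeomorphism, then the gluing theorem applied to the cube~\eqref{eq:cube}, with the cofibration conditions handled via~\eqref{eq: bdr P into over P is cofibration} and the NDR pair $(CX,X)$. Your treatment of $\beta_{2k+1}$ by explicit compatible retractions onto $S^1_{2k+1}$ is a harmless variation; the paper instead reuses the fibration-plus-five-lemma argument for both $\alpha$ and $\beta$, but either works.

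The gap is in your handling of the fiber map for $\alpha_{2k-1}$. You correctly identify that map as $\varphi_{2k-1}\circ q\big|_{\partial R_{2k}}\colon \partial\Delta^{2k}\to S^{2k-1}_{2k-1}$, but then propose a direct degree computation ``face by face.'' The sketch you give is not a proof: saying that each codimension-one face lands in the top cell of $X_{2k-1}$, and that the cyclic permutation of the faces is orientation-preserving, does not by itself pin down the degree. One would need to track how the $2k+1$ faces are glued together by $q$ and how $\varphi_{2k-1}$ (itself inductively defined) acts on the resulting cell structure; this is essentially a reproof of a nontrivial lemma from~\cite{moyVRmS1}, and your outline does not carry it out.

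The paper avoids this entirely by \emph{factoring} the fiber map rather than computing its degree: $\varphi_{2k-1}$ is a homotopy equivalence by the inductive hypothesis, and $q\big|_{\partial R_{2k}}\colon \partial R_{2k}\to X_{2k-1}$ is a homotopy equivalence by~\cite[Lemma~14]{moyVRmS1}. Composing gives the fiber map, so it is a homotopy equivalence with no degree bookkeeping needed. Then the five-lemma on the long exact sequences (together with Whitehead, since the spaces are CW) finishes $\alpha_{2k-1}$. You already have the inductive hypothesis in hand for the bottom-left diagonal arrow; use it again here instead of attempting a bare-hands degree argument.
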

\begin{proof}
The proof follows by induction on $k \ge 0$.
For the base case $k = 0$, we notice that $\varphi_1$ is a homeomorphism.
	
Let $k \ge 1$ and assume $\varphi_{2k-1}:X_{2k-1} \to S^{2k-1}_{2k-1}$ is a homotopy equivalence.
To be able to apply the gluing theorem for adjunction spaces~\cite[Theorem~7.5.7]{brown2006topology}, we need to show that in~\eqref{eq:cube}, the maps $i$ and $j$ are closed cofibrations and that the maps $\alpha_{2k-1}$ and $\beta_{2k+1}$ are homotopy equivalences.
	
The map $j$ is a closed cofibration since it is the inclusion of a CW subcomplex.
The map $i$ is a closed cofibration since it fits into the square diagram~\eqref{eq: bdr P into over P is cofibration} as the right vertical map where the left vertical map is an inclusion of a CW subcomplex.
	
Let us show that $\alpha_{2k-1}$, defined in \eqref{eq:map-alpha}, is a homotopy equivalence.
Since $\partial P_{2k+1}$ is a mapping torus homeomorphic to the product (see~\eqref{eq: mapping torus} and~\eqref{eq: bdr P into over P is cofibration}), we know that
\[
\partial P_{2k+1} \longrightarrow S^1_{2k+1},\quad
\sum_{j=0}^{2k}a_j\delta_{\theta+\frac{2\pi j}{2k+1}} \longmapsto \sum_{j=0}^{2k}\tfrac{1}{2k+1}\delta_{\theta+\frac{2\pi j}{2k+1}}
\]
is a trivial fiber bundle with the fiber over the base point $\sum_{j=0}^{2k}\frac{1}{2k+1}\delta_{\frac{2\pi j}{2k+1}}$ equal to $\partial R_{2k} \cong \partial \Delta_{2k}$.
Thus, $\alpha_{2k-1}$ is a fiber bundle map over the identity between trivial fibrations:
\begin{equation*}
\begin{tikzcd}
	\partial R_{2k}  \arrow[rr, "\varphi_{2k-1}~\circ~ q"] && S^{2k-1}_{2k-1}\\
	\partial P_{2k+1}  \arrow[rr, "\alpha_{2k-1}"] \arrow[d, twoheadrightarrow]\arrow[u, hookleftarrow] && S^{2k-1}_{2k-1}\times S^1_{2k+1} \arrow[d, twoheadrightarrow, "{\rm pr}_2"] \arrow[u, hookleftarrow] \\
	S^1_{2k+1} \arrow[rr, "\id"] && S^1_{2k+1}.
\end{tikzcd}
\end{equation*}
The map between fibers is a homotopy equivalence because $\varphi_{2k-1}\colon X_{2k-1} \to S^{2k-1}_{2k-1}$ is a homotopy equivalence due to the inductive hypothesis and $q\colon \partial R_{2k} \to X_{2k-1}$ is a homotopy equivalence due to~\cite[Lemma~14]{moyVRmS1}.
Thus, we conclude that $\alpha_{2k-1}$ is a homotopy equivalence from the 5-lemma applied to the morphism between long exact sequences of fibrations and Whitehead's theorem.

An analogous argument shows that the map $\beta_{2k+1}$, defined in \eqref{eq:map-beta}, is a homotopy equivalence.
This completes the inductive step as explained at the beginning of the proof.
\end{proof}

\subsection{Proof of equivariant homotopy equivalence}
\label{ssec:proof-ehe}

We now prove Theorem~\ref{thm:S^1-equivalence} by verifying conditions \eqref{enum: fix-point-sets are cw} and \eqref{enum: phi^H is equivalence} above, namely, that $(X_{2k+1})^H$ and $(S^{2k+1}_{2k+1})^H$ are finite CW-complexes, and
that $(\varphi_{2k+1})^H: (X_{2k+1} )^H \to (S^{2k+1}_{2k+1})^H$ is a homotopy equivalence.

Let us first explain why it is enough to consider only $H = \Z_n$ for odd $n \ge 1$ dividing $2k+1$.
We have
\[
(\overline{P}_{2k+1})^{\Z_n} =
\begin{cases}
\left\{\sum_{j=0}^{2k} a_j \delta_{\theta+ \frac{2\pi j}{2k+1}} \in \overline{P}_{2k+1}:~ a_j = a_{j+m ~{\rm mod}~2k+1}\right\} & 2k+1 = n m~ {\rm for } ~m\in \Z_{>0} \\
\emptyset & {\rm otherwise.}
\end{cases}
\]
In particular, from the back push-out square in \eqref{eq:cube} and by induction on $k \ge 0$, it follows that
\begin{align*}
X_{2k+1}^{\Z_n} & \approx 
\begin{cases}
    X_{mn}^{\Z_n} &n,~m{\rm ~odd~and~ } ~mn \le 2k+1 < (m+2)n\\
    \emptyset & {\rm otherwise.}
\end{cases}
\end{align*}
Similarly,
\begin{align*}
(S^{2k+1}_{2k+1})^{\Z_n} & \approx 
\begin{cases}
    S^1_n * S^1_{3n} * \dots * S^1_{mn} &n,~m{\rm ~odd~and~ } ~mn \le 2k+1 < (m+2)n\\
    \emptyset & {\rm otherwise.}
\end{cases}
\end{align*}
Therefore, without loss of generality, we may assume $H = \Z_n \subseteq S^1$ with $n$ odd and $2k+1=nm$, for some odd integer $m\ge 1$.

\begin{proof}[Proof of Theorem~\ref{thm:S^1-equivalence}]
As it was already noted, it is enough to show that $\varphi_{2k+1} \colon X_{2k+1} \longrightarrow S^{2k+1}_{2k+1}$ is an $S^1$-homotopy equivalence for every $k \ge 0$.
For this, it suffices to verify~(\ref{enum: fix-point-sets are cw}) and~(\ref{enum: phi^H is equivalence}) of the previous section. 

As explained before the proof, we may assume $H=\Z_n$ where $n \ge 1$ is odd and $2k+1=nm$ for some odd $m \ge 1$.
Define a map
\begin{align*}
    g_{n} \colon \vrm{S^1}{\pi-s} &\longrightarrow \vrm{S^1}{\pi-\tfrac{s}{n}} \\
    \mu = \sum_{j=0}^l a_j \delta_{\theta_j} &\longmapsto \sum_{t=0}^{n-1} \sum_{j=0}^l \tfrac{a_j}{n} \delta_{\frac{\theta_j}{n}+\frac{2 \pi t}{n}}
\end{align*}
with $a_j>0$, where we choose representative angles $\theta_j \in [0, 2 \pi)$ for each $j$.
See Figure~\ref{fig:g_n map} for an illustration of this map.
We will continue to identify points on the circle by angles, where angles are written modulo $2 \pi$.

\begin{figure}
    \centering
    \includegraphics[width=0.8\linewidth]{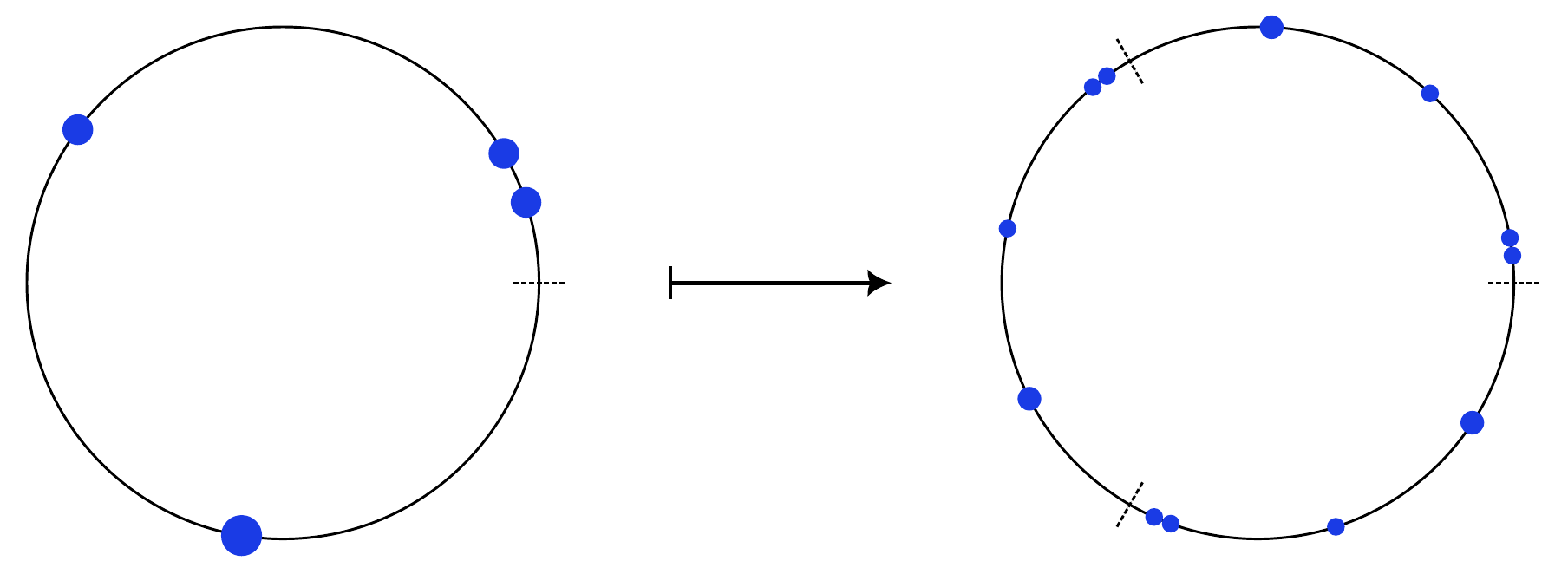}
    \caption{An example of $g_n$ with $n=3$.
    A measure $\mu$ in the domain is shown on the left, with four points in $\supp(\mu)$ and three $\mu$-arcs.
    Its image $g_3(\mu)$, on the right, has twelve points in its support and nine arcs.}
    \label{fig:g_n map}
\end{figure}

To see that $g_n$ is well-defined, for a given $\theta_j \in \supp(\mu)$, write a point of $\supp(\mu)$ farthest from $\theta_j$, and thus nearest to $\theta_j+\pi$, as $\theta_j + \pi+x$ with $s \leq |x| \leq \pi$.
Then for each $\tau = \frac{\theta_j}{n} + \frac{2 \pi t}{n} \in \supp(g_n(\mu))$, because $n$ is odd, a point in $\supp(g_n(\mu))$ nearest to the antipodal point 
\[
    \tau + \pi = \tfrac{\theta_j}{n} + \tfrac{2 \pi t}{n} + \pi = \tfrac{\theta_j+\pi}{n} + \tfrac{2 \pi \left(\frac{n-1}{2}+t\right)}{n}
\]
must be $\frac{\theta_j+\pi+x}{n} + \frac{2 \pi \left(\frac{n-1}{2}+t\right)}{n}$.
This support point has a distance of at least $\frac{s}{n}$ from $\tau + \pi$ and thus has a distance of at most $\pi - \frac{s}{n}$ from $\tau$, as required.

The map $g_n$ can be understood as a ``pullback'' map of the $n$-sheeted cover $f_n \colon S^1 \to S^1$ given by $f_n(e^{i\theta}) = e^{n i \theta}$, in the sense that for any small enough (diameter less than $\frac{2\pi}{n}$) measurable set $U \subseteq S^1$, we have $g_n(\mu)(U) = \frac{1}{n}\mu(f_n(U))$.

We first check that $g_n$ respects the equivalence relations from Section~\ref{ssec:ehe-background}, originally defined in~\cite{moyVRmS1}.
Each point $\theta_j \in \supp(\mu)$ excludes an arc $(\theta_j + \pi -s, \theta_j + \pi +s)$ (see Section~\ref{ssec:ehe-background}), and the corresponding $n$ points $\frac{\theta_j}{n} + \frac{2 \pi t}{n} \in \supp(g_n(\mu))$ with $0\leq t \leq n-1$ exclude the arcs 
\[
    \left(\tfrac{\theta_j-s}{n} + \pi + \tfrac{2 \pi t}{n},\tfrac{\theta_j+s}{n} + \pi + \tfrac{2 \pi t}{n}\right).
\]
Equivalently, by reindexing, we find that the $(l+1)n$ points of $\supp(g_n(\mu))$ exclude arcs 
\[
    \left(\tfrac{\theta_j + \pi -s}{n} + \tfrac{2 \pi t}{n},\tfrac{\theta_j + \pi +s}{n} + \tfrac{2 \pi t}{n}\right)
\]
for all $t$ and $j$.
Temporarily choosing an angle $y_0$ in the excluded region of $\mu$ and writing $\frac{1}{n}U = \{\frac{\theta}{n} : \theta \in U, y_0 < \theta < y_0 + 2 \pi\}$ for any $U \subset S^1$ not containing $y_0$, the above computation shows that for each $\mu$-arc $A$, we get $g_n(\mu)$-arcs $\frac{1}{n}A + \frac{2 \pi t}{n}$ for each $t$.
These arcs further have the property that for a measurable set $U \subseteq A$, we have $g_n(\mu)(\frac{1}{n}U + \frac{2 \pi t}{n}) = \frac{1}{n}\mu(U)$.
Since the quotient map $q$ is defined in terms of weighted averages \eqref{eq: q}, it follows that if $q(\mu_1) = q(\mu_2)$, then $q(g_n(\mu_1)) = q(g_n(\mu_2))$.
Thus, $g_n$ descends to a map $q(\vrm{S^1}{\pi-s}) \to q(\vrm{S^1}{\pi-\tfrac{s}{n}})$ on quotient spaces.
Restricting to skeletons gives maps
\[
    \overline{g}_{m,n} \colon X_m \longrightarrow X_{mn}.
\]
\fussy

The map $g_n$ has image equal to the fix-point set $\big( \vrm{S^1}{\pi-\tfrac{s}{n}} \big)^{\mathbb{Z}_n}$.
Furthermore, every class in $X_{mn}^{\mathbb{Z}_n}$ can be represented by a regular polygonal measure, where necessarily $n$ divides the number of vertices and the masses are periodic, so we also find that $\overline{g}_{m,n}$ has image equal to the fix-point set $X_{mn}^{\mathbb{Z}_n}$.
Since each class in $X_m$ corresponds to a unique regular polygonal measure, and distinct regular polygonal measures are sent to distinct measures by $\overline{g}_{m,n}$, we find that $\overline{g}_{m,n}$ is injective.
Furthermore, since the skeletons are compact and Hausdorff, $\overline{g}_{m,n}$ is a homeomorphism onto its image, so its restriction gives $X_m \cong X_{mn}^{\mathbb{Z}_n}$.
As $X_m$ has the CW structure described in~\cite{moyVRmS1}, we have shown that $X_{mn}^{\mathbb{Z}_n}$ is a finite CW complex, as required.

Next, define $h_{m,n} \colon S^m_m \to S^{mn}_{mn}$ as the composition
\[
    \begin{tikzcd}
        S^1_1 \ast S^1_3 \ast \cdots \ast S^1_{m-2} \ast S^1_m \arrow[r] & S^1_{n} \ast S^1_{3n} \ast \cdots \ast S^1_{(m-2)n} \ast S^1_{mn} \arrow[r, hook] & S^1_1 \ast S^1_3 \ast \cdots \ast S^1_{mn-2} \ast S^1_{mn}.
    \end{tikzcd}
\]
Here, the first map above is the join of the restrictions $g_n|_{S^1_k} \colon S^1_{k} \xrightarrow{~~\approx~~} S^1_{kn}$ for each odd $k$, each of which is a homeomorphism.
The second map above is obtained by inserting zero coefficients in the entries where the subscript is not divisible by $n$.
Expressing elements of the join as formal linear combinations, we have an explicit formula
\[
h_{m,n}(t_1\mu_1 + t_3 \mu_3 + \dots + t_m \mu_m) = t_1 g_n(\mu_1) + t_3 g_n(\mu_3) + \dots + t_m g_n (\mu_m).
\]
The map $h_{m,n}$ is thus a homeomorphism onto its image $S^1_{n} \ast S^1_{3n} \ast \cdots \ast S^1_{(m-2)n} \ast S^1_{mn}$, which is exactly $(S^{mn}_{mn})^{\mathbb{Z}_n}$.
Since $(S^{mn}_{mn})^{\mathbb{Z}_n} \cong S^m_m$ is a finite CW complex, we have accomplished our first goal of showing~(\ref{enum: fix-point-sets are cw}) of Section~\ref{ssec:ehe-def-prop}.

We next show that the diagram 
\[
\begin{tikzcd}
X_m \arrow[rr, "{\overline{g}_{m,n}}", "\approx"'] \arrow[d, "\varphi_m"', "\simeq"] && X_{mn} \arrow[d, "\varphi_{mn}"] \\
S^m_m \arrow[rr, "{h_{m,n}}", "\approx"']                                 && S^{mn}_{mn}                
\end{tikzcd}
\]
commutes.
Since $\varphi_m$ is a homotopy equivalence, commutativity of the diagram will show $\varphi_{mn}^{\mathbb{Z}_n}$ is a homotopy equivalence, giving~(\ref{enum: phi^H is equivalence}) of Section~\ref{ssec:ehe-def-prop}.

We check commutativity via the explicit formula for $\varphi_{2k+1}$ in~\eqref{eq: map phi}, using induction on $m$.
For the base case, if $m=1$, then 
\[\varphi_n(\overline{g}_{1,n}([\delta_{\theta}])) 
= \varphi_n\left(\left[\sum_{j=0}^{n-1} \tfrac{1}{n} \delta_{\frac{\theta}{n} + \frac{2 \pi j}{n}}\right]\right) 
= \sum_{j=0}^{n-1} \tfrac{1}{n} \delta_{\frac{\theta}{n} + \frac{2 \pi j}{n}} 
=g_n(\delta_\theta)
= h_{1,n}(\delta_{\theta}) 
= h_{1,n}(\varphi_1([\delta_{\theta}])),
\]
where the middle term is an element of $S^n_n = S^1_1 \ast S^1_3 \ast \cdots \ast S^1_n$ contained in the copy of $S^1_n$.
This confirms the base case.

For the inductive step, we prove the diagram above commutes for a given odd $m$ assuming $\varphi_{(m-2)n} \circ \overline{g}_{m-2,n} = h_{m-2,n} \circ \varphi_{m-2}$.
By~\eqref{eq: decomposition of P_bar}, we can express any class in $X_m$ as $[\mu]$ with $\mu = (1-t_m)\nu_m + t_m \mu_m$, where $\nu_m \in \partial P_{m}$ and $\mu_m = \sum_{j=0}^{m-1} \frac{1}{m} \delta_{\theta + \frac{2 \pi j}{m}}$ for some $\theta$.
Then using~\eqref{eq: map phi}, we have 
\begin{align*}
h_{m,n}(\varphi_m([\mu])) 
&= h_{m,n}\big((1-t_m)\varphi_{m-2}([\nu_m]) + t_m \mu_m\big)\\
&= (1-t_m)h_{m-2,n}(\varphi_{m-2}([\nu_m])) + t_m g_n(\mu_m)\\
&= (1-t_m)\varphi_{(m-2)n}(\overline{g}_{m-2,n}([\nu_m])) + t_m g_n(\mu_m).
\end{align*}
On the other hand,
\begin{align*}
\varphi_{mn}(\overline{g}_{m,n}([\mu])) 
&= \varphi_{mn}([g_n(\mu)]) \\
&= \varphi_{mn}\Big(\big[g_n((1-t_m)\nu_m+t_m\mu_m)\big]\Big) \\
&= \varphi_{mn}\Big(\big[(1-t_m)g_n(\nu_m) + t_m g_n(\mu_m)\big]\Big) \\
&= (1-t_m)\varphi_{mn-2}([g_n(\nu_m)])+t_m g_n(\mu_m) \\
&= (1-t_m) \varphi_{mn-2}(\overline{g}_{m-2,n}([\nu_m])) + t_m g_n(\mu_m)\\
&= (1-t_m) \varphi_{(m-2)n}(\overline{g}_{m-2,n}([\nu_m])) + t_m g_n(\mu_m),
\end{align*}
where for the final step, we observe that because $\nu_m$ has at most $m-2$ arcs, $g_n(\nu_m)$ has at most $(m-2)n$ arcs, so applying~\eqref{eq: map phi} repeatedly shows $\varphi_{mn-2}(\overline{g}_{m-2,n}([\nu_m])) = \varphi_{(m-2)n}(\overline{g}_{m-2,n}([\nu_m]))$.
Therefore $\varphi_{mn}(\overline{g}_{m,n}([\mu])) = h_{m,n}(\varphi_m([\mu]))$, so we have shown that the diagram commutes.
This finishes the inductive step and completes the proof of Theorem~\ref{thm:S^1-equivalence}. 
\end{proof}

\FloatBarrier

\section{Conclusion}
\label{sec:conclusion}

Whenever we have a filtration of $G$-spaces $Y_1 \hookrightarrow Y_2 \hookrightarrow \ldots \hookrightarrow Y_{k-1} \hookrightarrow Y_k$,
we can define the associated persistent equivariant cohomology module
\[H^*_G(Y_k) \to H^*_G(Y_{k-1}) \to \ldots \to H^*_G(Y_2) \to H^*_G(Y_1).\]
If $Y_k$ is $G$-contractible, 
then $H^*_G(Y_k)=H^*_G(\pt)=H^*(BG)$, and so we can view persistent equivariant cohomology as a way to combine all of the maps $Y_i \to \pt$ (showing that the equivariant cohomology $H^*_G(Y_i)$ is a module over the ring $H^*(BG)$) into a unified sequence of maps 
\[H^*(BG) \to H^*_G(Y_{k-1}) \to \ldots \to H^*_G(Y_2) \to H^*_G(Y_1).\]

In this paper, we focused on the setting where $X$ is a metric space equipped with an action of $G$ by isometries, and hence we have an induced action of $G$ on the Vietoris--Rips metric thickening $\vrm{X}{r}$ for all $r\ge 0$.
One could instead replace the Vietoris--Rips thickening with the intrinsic \v{C}ech metric thickening~\cite{AAF} defined via
\[\cechm{X}{r}\coloneqq \left\{\sum_{j=0}^n a_j \delta_{x_j}~\Bigg|~n\ge 0,\ a_j\ge 0,\ \sum_j a_j=1,\ \cap_{j=0}^n B(x_j,r)\neq \emptyset\right\},\]
where $B(x_j;r)$ is the ball in $X$ of radius $r$ about the center point $x_j$.
One could also study sublevelset persistent equivariant cohomology: given a $G$-equivariant space $Y$ and a $G$-invariant function $f\colon Y\to \R$, we get a filtration of $G$-equivariant sublevelsets
\[ f^{-1}((-\infty,r_1]) \hookrightarrow f^{-1}((-\infty,r_2]) \hookrightarrow \ldots \hookrightarrow f^{-1}((-\infty,r_{k-1}]) \hookrightarrow f^{-1}((-\infty,r_k]) \]
for any $r_1 \le r_2 \le \ldots \le r_{k-1}\le r_k$.
If $f$ happens to be a Morse function then there is a close connection to Morse--Bott theory (see Question~\ref{ques:Morse-Bott}), but one can still define the persistent equivariant cohomology of this sublevelset filtration even if $f$ is not Morse.

We end by advertising a list of open questions, which we hope will inspire future collaborations between mathematicians with training in applied topology and mathematicians with training in (equivariant) homotopy theory.

\begin{question}
What is the persistent $S^1$-equivariant cohomology of the intrinsic \v{C}ech metric thickienings $\cechm{S^1}{r}$ and complexes $\cech{S^1}{r}$ of the circle?
The intrinsic \v{C}ech simplicial complex $\cech{S^1}{r}$ of the circle is the nerve of balls drawn in the circle (i.e.\ the nerve of circular arcs), not the nerve of balls in $\R^2$.
These \v{C}ech complexes $\cech{S^1}{r}$ also obtain the homotopy types of all odd-dimensional spheres~\cite[Section~9]{AA-VRS1}, although at different scale parameters and with different fixed point sets than the Vietoris--Rips thickenings of the circle.
\end{question}

\begin{question}
What is the persistent $S^1$-equivariant cohomology of the anti-Vietoris--Rips metric thickenings of the circle?
See~\cite[Chapter~5]{MoyThesis} and~\cite{AEMM}.
\end{question}

\begin{question}
If $X_k$ is a set of $k$ evenly-spaced points on the circle, then the Vietoris--Rips  and \v{C}ech complexes $\vr{X_k}{r}$ and $\cech{X_k}{r}$ are equipped with an action by the dihedral group $D_{2k}$.
What are the persistent $D_{2k}$-equivariant cohomologies of $\vr{X_k}{r}$ and $\cech{X_k}{r}$?
We note that the homotopy types of $\vr{X_k}{r}$ and $\cech{X_k}{r}$ are always either a single odd-dimensional sphere or a wedge sum of even-dimensional spheres of the same dimension~\cite{Adamaszek2013,AAFPP-J}.
We refer the reader to~\cite[Section~7.2]{AAFPP-J} for how the dihedral group acts on the homology of $\cech{X_k}{r}$.
\end{question}

\begin{question}
\label{ques:S^n-pec}
Let $\so(n+1)$ be the special orthogonal group of all rotations of $\R^{n+1}$.
Since $\so(n+1)$ acts by isometries on the $n$-sphere $S^n$, we get an induced action of $\so(n+1)$ on the Vietoris--Rips and intrinsic \v{C}ech metric thickenings $\vrm{S^n}{r}$ and $\cech{S^n}{r}$.
What are the persistent $\so(n+1)$-equivariant cohomologies of the Vietoris--Rips and \v{C}ech thickenings of the $n$-sphere?
\end{question}

\begin{question}
The Vietoris--Rips metric thickening $\vrm{S^n}{r}$ of the $n$-sphere is homotopy equivalent to $S^n$ for all $r<r_n$, and furthermore $\vrm{S^n}{r_n} \simeq S^n * \frac{\so(n+1)}{A_{n+2}}$~\cite[Theorem~5.4]{AAF}.
Here, $r_n\coloneqq \arccos(\frac{-1}{n+1})$ is the diameter of the vertex set of a regular $(n+1)$-simplex inscribed in $S^n$, and the alternating group $A_{n+2}$ encodes the rotational symmetries of a regular $(n+1)$-simplex inscribed in $S^n$.
What are the homotopy types of $\vrm{S^n}{r}$ for $r_n<r<\pi$, and could information about Question~\ref{ques:S^n-pec} be helpful for determining these homotopy types?
\end{question}

\begin{question}
\label{ques:S^n-cech}
The intrinsic \v{C}ech metric thickening $\vrm{S^n}{r}$ of the $n$-sphere is homotopy equivalent to $S^n$ for sufficiently small scales; this is a metric analogue (see~\cite[Theorem~4.4]{AAF} and~\cite[Theorem~5.5]{AM}) of the nerve lemma (\cite[Corollary~4G.3]{Hatcher}).
But for $r>\frac{\pi}{2}$, analogues of the nerve lemma will no longer apply, since the cover of $S^n$ by balls of radius $r>\frac{\pi}{2}$ is no longer a good cover.
What are the homotopy types of $\cech{S^n}{r}$ for $\frac{\pi}{2}<r<\pi$, and could information about Question~\ref{ques:S^n-pec} be helpful for determining these homotopy types?
\end{question}

\begin{question}
Since $S^3$ is a Lie group, we have an action of $S^3$ on itself, which induces an action of $S^3$ on the Vietoris--Rips and \v{C}ech metric thickenings $\vrm{S^3}{r}$ and $\cechm{S^3}{r}$ as $r$ increases.
What is the persistent $S^3$-equivariant cohomology of $\vrm{S^3}{r}$ and $\cechm{S^3}{r}$?
For other choices of Lie groups $M$, what is the persistent $M$-equivariant cohomology of $\vrm{M}{r}$ and $\cechm{M}{r}$?
\end{question}

\begin{question}
What is the persistent equivariant cohomology of the action of the hyperoctahedral group on $\vr{Q^n}{r}$, where $Q_n$ is the vertex set of the $n$-dimensional hypercube, equipped with the Hamming metric?
See~\cite{adamaszek2022vietoris,adams2024lower,bendersky2023connectivity,briggs2024facets,carlsson2020persistent,feng2023homotopy,feng2023vietoris,saleh2024vietoris,shukla2022vietoris} for information on the homotopy types and homology groups of these complexes.
\end{question}

\begin{question}
\label{ques:Morse-Bott}
One can compute the persistent cohomology of any filtration on a space, whether or not that filtration comes from a Morse function.
But, if the filtration does come from a Morse function, then there is a close connection with Morse theory: every starting value or ending value of a bar in the persistent cohomology module corresponds to the value of some (isolated) critical point of the Morse function.
Similarly, one can compute the persistent equivariant cohomology of any equivariant filtration on an equivariant space.
But, if the filtration comes from an equivariant Morse--Bott function, then there should be a close connection with Morse--Bott theory: every starting or ending value of a bar in the persistent equivariant cohomology should correspond to the value of some critical submanifold of the Morse--Bott function.
What is the precise relationship between persistent equivariant cohomology and Morse--Bott functions?
We refer the reader to~\cite{austin1995morse} for a description of the relationship between equivariant cohomology and Morse--Bott theory.
\end{question}

\begin{question}
The following question is due to Matthew Zaremsky.
Which infinite groups admit a contractible Vietoris--Rips complex?
Does every group that acts properly cocompactly on a contractible complex admit a contractible Vietoris--Rips complex (with respect to a word metric from some finite generating set)?
We refer the interested reader to~\cite{zaremsky2022bestvina,Zaremsky-VRtalk2021} for a phrasing of this question as when a group of ``type $F_*$'' can be revealed as such via Vietoris--Rips complexes.
See~\cite{373007} for remarks on integer lattices with standard and non-standard word metrics, and for results on right-angled Artin groups.
With the word metric using the standard generators $e_1=(1,0,\ldots,0)$, $e_2=(0,1,0,\ldots,0)$, \ldots, $e_n=(0,\ldots,0,1)$, Virk recently showed that $\vr{\Z^n}{r}$ is contractible for all $r$ sufficiently large~\cite{virk2024contractibility}, although it is not yet proven if $r\ge n$ suffices.
\end{question}

\section{Acknowledgment}

We would like to thank Tom Goodwillie for his helpful answers to our Math Overflow question~\cite{457002}, which in particular pointed us to the Gysin sequence.

\bibliographystyle{plain}
\bibliography{PersistentEquivariantCohomology.bib}

\end{document}